\theoremstyle{plain}
\newtheorem{theorem}{Theorem}[section]
\newtheorem{lemma}[theorem]{Lemma}
\newtheorem{proposition}[theorem]{Proposition}
\newtheorem{corollary}[theorem]{Corollary}
\theoremstyle{definition}
\newtheorem{definition}[theorem]{Definition}
\newtheorem{example}[theorem]{Example}
\newtheorem{conjecture}[theorem]{Conjecture}
\newtheorem{remark}[theorem]{Remark}
\newcommand{\lra}{\longrightarrow}
\newcommand{\fpt}[1]{\textsf{fpt}({#1})}
\newcommand{\lct}[1]{\textsf{lct}({#1})}
\newcommand{\fl}[1]{\lfloor{#1}\rfloor}
\newcommand{\test}[2]{\tau\left( {#1} \bullet {#2}\,\right)}
\newcommand{\la}{\lambda}
\newcommand{\<}{\prec}
\newcommand{\repe}[1]{({#1}^{\lceil\lambda {p^e} \rceil})^{[1/{p^e}]}}
\newcommand{\re}[1]{\left({#1}\right)^{[1/{p^e}]}}
\newcommand{\req}[1]{\left({#1}\right)^{[1/{q}]}}
\newcommand{\reqs}[1]{({#1})^{[1/{q}]}}
\newcommand{\repeq}[1]{\left({#1}^{\lceil\lambda {q} \rceil}\right)^{[1/{q}]}}
\newcommand{\ree}[1]{{#1\,}^{[1/{p^e}]}}
\DeclareMathOperator{\F}{F}
\newcommand{\suc}{\succ}
\newcommand{\ini}[1]{\textsf{in}_{\<}\,{#1}}
\begin{document}
 \begin{abstract}
We study higher jumping numbers and generalized test ideals associated to determinantal ideals over a  field of positive characteristic. We work in positive characteristic to give a complete characterization of both families for ideals generated by maximal minors; and make a conjecture for the general case. 
Although these invariants are understood asymptotically, as their positive characteristic grows to infinity they essentially coincide with their characteristic zero analogues, there are known examples  where the behavior in positive characteristic differs significantly from that of characteristic zero.
 This new knowledge associated to determinantal ideals strengthens the case that test ideals and multiplier ideals exhibit essentially the same description. 
  \end{abstract}
\title[F-thresholds and Generalized test ideals]{F-thresholds and Generalized test ideals of determinantal ideals of maximal minors}
\author{I.~Bonacho dos Anjos~Henriques}
\address{School of Mathematics and Statistics, University of Sheffield, \ Hicks Building, Hounsfield Road, Sheffield {S3 7RH}, United Kingdom.}
\email{i.henriques@sheffield.ac.uk}
\date{\today}
\subjclass[2000]{13A35,14M12 (primary), 14B05, 14F18 (secondary)
}
\keywords{Determinantal ideals, Generalized Test ideals, Multiplier ideals, \\$\F$-pure thresholds, log canonical thresholds.} \thanks{Research supported by EPSRC grant EP/J005436/1.}
\maketitle
\section{Introduction}\label{Section: Introduction}\label{Section: Introduction}
 Test ideals first appeared in the context of Tight Closure theory and were proved to reflect the singularities of a ring of positive characteristic.  \emph{Generalized test ideals} in algebra, as well as \emph{multiplier ideals} in birational geometry and analysis, spurred from an effort to better measure singularities. These far more subtle measurements emerged from analytic, geometric and algebraic points of view and have been the subject of a number of recent survey articles, e.g. \cite{ST},\cite{BFS}.

Motivated by a close connection to multiplier ideals in characteristic zero, N. Hara and K. Yoshida defined \emph{generalized test ideals} as their prime characteristic analogue, cf. \cite{HY}. Whereas multiplier ideals are defined geometrically, using log resolutions (see \cite{Laz}), or even analytically using integration, test ideals are defined algebraically using the Frobenius endomorphism.  Let $R$ be an $\F$-finite regular ring of positive characteristic, the \emph{generalized test ideals} of an ideal $I$ form a non-increasing, right continuous family, $\displaystyle{\{\tau(c \bullet I)\}}$, parametrized by a positive real parameter c. The points of discontinuity in this parametrization, are called \emph{F-thresholds of I}. It is known that the F-thresholds of an ideal form a discrete subset of the rational numbers, if the ring is nice enough. The issue of discreteness and rationality was first raised in \cite{MTW}, and shown to hold for every ideal in regular, $\F$-finite rings essentially of finite type over a field of positive characteristic, see \cite{BMS1}. Contributions of Hara, Katzman, Lyubeznik, Takagi, Takahashi, Schwede,Tucker and Zhang widened the class of rings over which it rationality and discreteness are known to hold (cf. \cite{ST12, ST13}). 

The first jump, i.e. the smaller threshold of an ideal $I$, plays an important role in both characteristics. In characteristic positive $p$, it is called the \emph{F-pure threshold}, denoted  ${\textsf{fpt}\,(I_{p})}$, and defined algebraically as \ $\fpt{I_{p}}={\min\{c\in\mathbb{R}_{+}\,|\ \tau(c \bullet I_{p})\ \neq R\}}.$   In characteristic zero, the first jumping number of the multiplier ideals, is called the \emph{log canonical threshold} and  denoted $\lct{I}$. 

Defining essentially the same invariant,  $\lct{I}$ and $\fpt{I_{p}}$, transpire that \emph{the smaller their values, the worse the $\F$-singularities of the ring $R/I_{p}$.} 
In \cite{HW}  Hara and  Watanabe  show that $\fpt {f_{p}}\leq \lct {f}$\  and $\displaystyle{\lim_{p\lra\infty} \fpt {f_{p}} = \lct {f}},$ where $ {f_{p}} $ is reduced from a polynomial $f$ in characteristic zero, to a polynomial ${f_{p}}$ in characteristic $p$.  Musta\c ta-Takagi-Watanabe conjectured in \cite{MTW}, that the equality $\fpt {f_{p}} = \lct {f}$, {holds for infinitely many primes $p$}.

  Relying on a log resolution of singularities, in characteristic zero, one associates to $I$  the family of \emph{multiplier ideals of $I$}, denoted $\left\{{\mathcal{J} (c \bullet {I})}\right\}_{c\, \geq 0}$. Parametrized by a positive real number $c$, \ ${\mathcal{J} (c \bullet {I})}$ form a right continuous, non-decreasing family of ideals whose first jumping number defines  the $\lct{I}$.   Just as was the case for the thresholds  $\textsf{fpt}$ and $\textsf{lct}$ (see \cite[3.4, 6.8]{HY}, \cite[\S 4]{BFS}), it is conjectured in \cite{MTW} that a parameter-wise equality of the test ideal and the (reduction to positive characteristic of the) multiplier ideal, holds in infinitely many prime characteristics. In \cite{HY} Hara and Yoshida show that this coincidence holds for any monomial (Theorem 6.10) and any toric ideal $I$ in a $\mathbb{Q}$-Gorenstein toric ring over a field of positive characteristics. Work of Hernandez on computing the F-pure threshold of binomial and diagonal hypersurfaces  \cite{Her1},\cite{Her2}, work of Miller-Singh-Varbaro \cite{MSV} and Bhatt \cite{Bh} shine some light into the complexity of the coincidence between the reductions of multiplier ideals and generalized test ideals. Recent results of Tucker and Schwede \cite{BScTu11} claim that "test ideals and multiplier ideals are morally equivalent" in the sense that they allow a unified description, in terms of \emph{alterations}, that holds in both of the characteristics, yet, better knowledge of their precise coincidence remains an open question.

This paper considers ideals generated by maximal minors of  a matrix of indeterminates, in its polynomial ring over a field of positive characteristic. We give a complete description of their generalized test ideals and F-thresholds, thus proving that the above mentioned conjectures hold for a determinantal ideal $I$  of maximal minors. It is shown that the equality \ $\tau({c} \bullet {I_{p}})=\mathcal{J}_{_{p}} (c\bullet {I})$ \  holds for all values of $c$ and all primes $p$, where $I_{p},$ and $\mathcal{J}_{_{p}} (c\bullet {I})$ denote the reductions of $I$ and $\mathcal{J}(c\bullet {I})$ to positive characteristic (cf. \cite[3.3]{HY}, \cite[3.4,4.1]{BFS}).

In Section 1, we recall the necessary definitions, and provide some context for our computations. We recall previous work of several authors (\cite{BMS1},\cite{BMS2},\cite{HY},\\\cite{MTW}) including recent work of Miller, Singh and Varbaro \cite{MSV} proving that the $\F$-pure threshold of an ideal generated by arbitrary size minors of a matrix of indeterminates in a polynomial ring over a field of positive characteristic $p$, is independent of $p$. The formula given coincides with the known formula for the log canonical threshold of such ideals (seen as ideals in a ring of characteristic zero).

In Section 2, we prove the main result of the paper Theorem \ref{Main}. It establishes the parameter-wise coincidence of the test ideal and the reduction to positive characteristic of the multiplier ideal, of a determinantal ideal of maximal minors, regardless of the value of the prime characteristic.   Motivated by this result and others in \cite{HY} and \cite{ST13}, we formulate Conjecture \ref{coMain} regarding determinantal ideals of arbitrary size minors.

\section{Summary of F-thresholds and Generalized Test ideals}\label{Section: thresholds and test ideals}

In this section we introduce the necessary definitions and results, and provide a context for our computations. Throughout we will let $R$ be a Noetherian polynomial ring over a field $k$ of prime characteristic (or even a Noetherian regular $\F$-finite ring of prime characteristic $p$). 
\subsection{  $\F$-thresholds of ideals.}
\noindent We recall the definition of $\F$-thresholds as the limit of a rational sequence, noting its existence and finiteness 
(cf.\cite[\S2,\S3]{BMS1,BMS2}).
\begin{definition}\label{fpt}
Let ${\mathfrak a}, {\mathfrak b}$ be ideals of $R$ such that \ ${{\mathfrak a}}\subseteq\operatorname{rad}({{\mathfrak b}})$\,. For every $q=p^e\geq 1$, define the \emph{$\F$- threshold of ${{\mathfrak a}}$ with respect to ${{\mathfrak b}}$} to be
 \ $ 
c^{{{\mathfrak b}}}({{\mathfrak a}})=\lim_{q\to\infty}\ \frac{\nu^{{{\mathfrak b}}}_{{{\mathfrak a}}}(q)}{q}=\\=\sup_{e\,\geq\, 0}\ \frac{\nu^{{{\mathfrak b}}}_{{{\mathfrak a}}}(q)}{q}\,
\ $
where
\  $
\nu^{{{\mathfrak b}}}_{{{\mathfrak a}}}(q)=\max \{r\geq 0 \,|\  {{\mathfrak a}}^r \nsubseteq  {{{\mathfrak b}}}^{[q]}\}.$ 
\end{definition}
\noindent If ${\mathfrak m}$ is a maximal ideal, the $\F$- threshold of ${{\mathfrak a}}$ with respect to ${\mathfrak m}$ plays a particularly important role. We refer the reader to \cite[\S2]{BMS1} and \cite[\S3]{BMS2}.
\begin{example}\label{regseq}\cite[1.3]{MTW} \quad
If ${\mathfrak b}$ is generated by a  regular sequence of length $r$ then \ $c^{{{\mathfrak b}}}({{\mathfrak b}})=\displaystyle{\lim_{q\to\infty}\ {r(q-1)}/{q}=r.}$\ \  If ${\mathfrak b}$ is a maximal ideal, then ${c^{{{\mathfrak b}}}({{\mathfrak b}})=\dim R.}$
\end{example}
 
\subsection{  The $e^{\textrm{th}}$ root ideal.}\textrm{  }
Given $e\in \mathbb{N}$ set $q=p^e$. The ring $R$ is flat, hence projective, over $R^{q}$. Thus $R^{q}$ is an $\cap$-flat $R$-module in the sense of \cite{HH1}, every ideal ${\mathfrak a}$ of $R$ satisfies $\displaystyle{\bigcap_{\tiny{\begin{array}{l}{{\mathfrak a}} \subseteq {{\mathfrak b}}^{[q]}\\ \text{ideal }{\mathfrak b}\end{array}}}\, {{\mathfrak b}}^{[q]}\,=\, {\Big(\bigcap_{\tiny{\begin{array}{l}{{\mathfrak a}} \subseteq {{\mathfrak b}}^{[q]}\\ \text{ideal }{\mathfrak b}\end{array}}}\!\!\!{{\mathfrak b}}\Big)}{}^{[q]}},$ and we make the following definition:
\begin{definition}\label{ethroot}
Fix \ $e\geq 0$ and let \ $q = p^e\,.$ \ The \emph{$\text{e}^{\text{th}}$ root ideal of an ideal ${\mathfrak a}$} in $R$, denoted $\ree{{\mathfrak a}}$ is defined to be the unique smallest ideal\, ${\mathfrak b}$, such that \,${{\mathfrak a}}\, \subseteq\, {{\mathfrak b}}^{[q]}$.
\end{definition}

\subsubsection{ {Computing $e^{\textrm{th}}$ root ideals.\ }}{\cite[2.5]{BMS1}
}\label{Prop. 2.5 BMS (2008)}\ 
Fix $e\, \geq\, 0$ and a basis  \ $\mathcal{B}_{e}$ for  $R$ over $R^{{p^{\!e}}}\!\!\!.$\ \ If  ${{\mathfrak a}=(a_{1},\dots,a_{s})}$ is an ideal whose generators are expressed in terms of $\mathcal{B}_{e}$ as \ 
\[
a_i=\sum_{\mu\in\mathcal{B}_{e}}\, {\left(g_{i\mu}\right)}^{p^e} \mu,\quad\]
$\text{for some } \ {g_{i\mu}\in R}, \text{ and } \ { i=1,\,\dots,\,s},
\ $
then \ \ ${{\mathfrak a}}^{[1/{p^{e}}]}=\left(\{\ g_{i,\mu}\,\left|\ {\mu\in\mathcal{B}_e}\,,\ 1\leq i\leq s\right.\, \}\right).$\\
\noindent If $R=K[z_{1},\dots,z_{s}]$ is a polynomial ring over a field  $K$ of characteristic $p>0$,
and ${\mathfrak a}$ has coefficients in an $F$-finite subfield $k\subseteq K$, then one may use the basis   
\[\mathcal{B}_{e}=\{\alpha \, x_{1}^{u_{1}}\dots x_{s}^{u_{s}}\,|\ \alpha\in \mathcal{B}_{k},\  0\leq u_{1},\dots, u_{s}\leq p^{e}-1\}\,\] 
where $\mathcal{B}_{k}$ forms a basis of $k$ over $k^p$, to compute the $e^{\textrm{th}}$ root  of ${{\mathfrak a}}$ in $k[z_{1},\dots,z_{s}]$. Thought of as an ideal of $R$, it coincides with the $e^{\textrm{th}}$ root  of ${{\mathfrak a}}$ in $R,\ \ree{{\mathfrak a}}$.
\begin{proposition}\label{propertieserooth}
The following hold for ideals \,${\mathfrak a},\,{\mathfrak b}$\, of \,$R$\, and integers $e,\,\ell$ in $\mathbb{N}$:
\begin{enumerate}
\item\label{1}
${\mathfrak a}\,\subseteq\, {\left({{\mathfrak a}}^{[1/p^e]}\right)}^{[p^{e}]}$
\item\label{2}
$({{\mathfrak a}\,}^{[p^{\ell}]}\,)^{[1/p^{e}]}\,=\,{{{\mathfrak a}\,}^{[p^{\ell-e}]}}$ \ \ \ for \ $\ell\,\geq e\,$. 
\item\label{3}
If \ ${\mathfrak a}\subseteq {\mathfrak b}$, \ then \ $\ree{{\mathfrak a}}\subseteq\, \ree{{\mathfrak b}}$\,.
\item\label{4}
$\re{W^{-1}\,{\mathfrak a}\,}=W^{-1}\left(\ree{{\mathfrak a}}\right)$\ \ for any multiplicative system $W$ of $R$.
\item \label{5}
If $\phi$ is a ring isomorphism,  then \, $\phi(\ree{{\mathfrak a}})\,=\,\ree{\phi({\mathfrak a})}$.
\item\label{6}
If $R=k[\,z_{1},\dots,z_{r}\,]\subseteq k[\,z_{1},\dots,z_{r},\dots,z_{s}\,]=T$, then \  $\ree{{\mathfrak a}\,T}=\ree{{\mathfrak a}}\,T$\,.
\end{enumerate}
\end{proposition}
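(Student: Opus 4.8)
The plan is to verify each of the six statements in Proposition~\ref{propertieserooth} by reducing to the defining property of the $e^{\textrm{th}}$ root ideal (Definition~\ref{ethroot}) together with the concrete description of $\ree{{\mathfrak a}}$ via a basis recalled in \S\ref{Prop. 2.5 BMS (2008)}. Throughout I write $q=p^e$. For \eqref{1}, since $\ree{{\mathfrak a}}$ is by definition \emph{an} ideal ${\mathfrak b}$ satisfying ${\mathfrak a}\subseteq{\mathfrak b}^{[q]}$, the containment is immediate from the definition. For \eqref{3}, if ${\mathfrak a}\subseteq{\mathfrak b}$ then ${\mathfrak a}\subseteq{\mathfrak b}\subseteq\bigl(\ree{{\mathfrak b}}\bigr)^{[q]}$ by \eqref{1}, so $\ree{{\mathfrak b}}$ is one of the ideals ${\mathfrak c}$ with ${\mathfrak a}\subseteq{\mathfrak c}^{[q]}$; since $\ree{{\mathfrak a}}$ is the smallest such ideal, $\ree{{\mathfrak a}}\subseteq\ree{{\mathfrak b}}$. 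These two are pure formalities.

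For \eqref{2} I would argue by double containment. The containment ${\mathfrak a}^{[p^{\ell-e}]}\supseteq\bigl({\mathfrak a}^{[p^\ell]}\bigr)^{[1/q]}$ follows from minimality: we have ${\mathfrak a}^{[p^\ell]}=\bigl({\mathfrak a}^{[p^{\ell-e}]}\bigr)^{[q]}$, so ${\mathfrak a}^{[p^{\ell-e}]}$ is one of the ideals ${\mathfrak b}$ with ${\mathfrak a}^{[p^\ell]}\subseteq{\mathfrak b}^{[q]}$, whence it contains the smallest one. For the reverse containment, apply \eqref{1} to ${\mathfrak a}^{[p^\ell]}$: this gives ${\mathfrak a}^{[p^\ell]}\subseteq\bigl(({\mathfrak a}^{[p^\ell]})^{[1/q]}\bigr)^{[q]}$; now take $e^{\textrm{th}}$ roots of both sides under the Frobenius-twisting identity $({-})^{[p^{\ell-e}]}$ — more carefully, raise to the $[1/p^\ell]$ power (or note that taking $q$-th powers is injective since $R^q\hookrightarrow R$ is faithfully flat, $R$ being a regular ${\mathfrak F}$-finite ring, hence a domain in the relevant local pieces) to cancel the outer bracket and conclude ${\mathfrak a}^{[p^{\ell-e}]}\subseteq({\mathfrak a}^{[p^\ell]})^{[1/q]}$. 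The cleanest route is to invoke the basis description: a basis $\mathcal{B}_e$ for $R$ over $R^q$ lets one read off $({-})^{[1/q]}$ coefficient-wise, and the generators of ${\mathfrak a}^{[p^\ell]}$ are $p^\ell$-th powers, so their expansion in $\mathcal{B}_e$ has coefficients that are themselves $p^{\ell-e}$-th powers of the coefficients expanding the generators of ${\mathfrak a}^{[p^{\ell-e}]}$; collecting them gives exactly ${\mathfrak a}^{[p^{\ell-e}]}$.

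Statements \eqref{4}, \eqref{5}, \eqref{6} are all instances of the same principle: the formation of $e^{\textrm{th}}$ root ideals commutes with a ring map $\varphi$ provided $\varphi$ is flat and sends the chosen $R^q$-basis to a $\varphi(R)^q$-basis (or a basis of the target over the relevant Frobenius power). For \eqref{5}, a ring isomorphism $\phi$ carries the lattice of ideals of $R$ bijectively and order-preservingly to that of the target, and commutes with the $q$-th power operation, so it carries the smallest ${\mathfrak b}$ with ${\mathfrak a}\subseteq{\mathfrak b}^{[q]}$ to the smallest ${\mathfrak b}'$ with $\phi({\mathfrak a})\subseteq{\mathfrak b}'^{[q]}$; hence $\phi(\ree{{\mathfrak a}})=\ree{\phi({\mathfrak a})}$. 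For \eqref{4}, localization at $W$ is flat and exact and commutes with Frobenius powers, and $W^{-1}R$ inherits a basis over $(W^{-1}R)^q=W^{-1}(R^q)$ from $\mathcal{B}_e$; applying the basis formula of \S\ref{Prop. 2.5 BMS (2008)} to generators of $W^{-1}{\mathfrak a}$ (which are localizations of generators of ${\mathfrak a}$) produces the localizations of the coefficients $g_{i\mu}$, so $\re{W^{-1}{\mathfrak a}}=W^{-1}\ree{{\mathfrak a}}$. For \eqref{6}, the key point is that the monomial basis of $T=k[z_1,\dots,z_s]$ over $T^q$ is the product of the monomial basis of $R=k[z_1,\dots,z_r]$ over $R^q$ with the monomial basis in the remaining variables $z_{r+1},\dots,z_s$ over their $q$-th powers; since ${\mathfrak a} T$ is generated by elements of $R$, their $\mathcal{B}_e$-expansions only involve the $R$-part of the basis, so the coefficients extracted are the same $g_{i\mu}\in R$ whether computed in $R$ or in $T$, giving $\ree{{\mathfrak a} T}=\ree{{\mathfrak a}}\,T$. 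The main obstacle, such as it is, is bookkeeping: making the basis-compatibility statements precise and checking in \eqref{4} and \eqref{6} that passing to $W^{-1}R$ or to $T$ really does send $\mathcal{B}_e$ to an honest free basis over the appropriate Frobenius power ring — but this is exactly the content of the displayed $\cap$-flatness identity preceding Definition~\ref{ethroot}, and once that is in hand each part is routine.
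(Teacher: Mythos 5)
Your proposal is correct, and where it overlaps with what the paper actually argues it follows the same route: for part (6) you use exactly the paper's mechanism, namely that the monomial basis $\mathcal{B}_e^{R}$ of $R$ over $R^{p^e}$ sits inside a basis $\mathcal{B}_e^{T}$ of $T$ over $T^{p^e}$ and that the generators of ${\mathfrak a}T$ lie in $R$, so the coefficient formula of \S\ref{Prop. 2.5 BMS (2008)} returns the same coefficients $g_{i\mu}$ whether computed in $R$ or in $T$. The difference is that for parts (1)--(5) the paper gives no argument at all, simply citing \cite[2.4, 2.13]{BMS1}, whereas you supply direct proofs from the minimality characterization in Definition~\ref{ethroot} together with the basis formula; these are the standard arguments (essentially those of \cite{BMS1}) and they go through. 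Two small points to tighten: in (2), the step where you ``cancel the outer bracket'' needs the fact that ${\mathfrak c}^{[q]}\subseteq{\mathfrak d}^{[q]}$ forces ${\mathfrak c}\subseteq{\mathfrak d}$, which rests on flatness of $R$ over $R^{q}$ (your aside about ``domains in the relevant local pieces'' is not the right justification), but your alternative basis computation --- expanding the generators $a_i^{p^{\ell}}=(a_i^{p^{\ell-e}})^{p^{e}}\cdot 1$ with $1\in\mathcal{B}_e$ --- already closes this cleanly; and in (4), the identification $(W^{-1}R)^{q}=W^{-1}(R^{q})$ should be $(W^{-1}R)^{q}=(W^{q})^{-1}(R^{q})$, which is harmless since $W^{-1}R=(W^{q})^{-1}R$ and the localized basis argument proceeds unchanged.
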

We refer the reader to {\cite[2.13, 2.4]{BMS1}} 
  for the proofs of (1) to (5).  Part (6)  follows from \ref{Prop. 2.5 BMS (2008)}, as a generating for set for ${\mathfrak a}$ over $R$ is a generating for set for ${\mathfrak a} T$ over $T$, and a basis of $R$ over $R^{{p^{\!e}}}\!$ can be extended to one of $T$ over $T^{{p^{\!e}}}\!$, e.g.
\begin{align*}
\mathcal{B}_{e}^{R} &=\{ \alpha z_{1}^{u_{1}}\!\dots z_{r}^{u_{r}}\,|\ \ \alpha\in {k},\ 0\leq u_{1},\dots, u_{r}\leq p^{e}-1\}\\
&\subseteq
\{\alpha z_{1}^{u_{1}}\!\dots z_{s}^{u_{s}}\,|\ \ \alpha\in k,\ 0\leq u_{1},\dots,u_{s}\leq p^{e}-1\}=\!\mathcal{B}_{e}^{T}
\end{align*}

\subsection{  Generalized test ideals.}
We now recall the notion of Generalized test ideals as introduced in \cite{HY}.
\begin{definition}\label{testidealdef}
Given an ideal ${\mathfrak a}$ of $R$ and a positive real parameter $\lambda$ in $\mathbb{R}^{+}$, we define  the\emph{ generalized test ideal of ${\mathfrak a}$ at $\lambda$} to be the ideal
\[
\test{\lambda}{{\mathfrak a}}=\bigcup_{e\geq 0}\, {\repe{{\mathfrak a}}}\,=\repe{{\mathfrak a}}\ \quad\text{for} \ \ e\gg0.
\] 
\end{definition}
\begin{remark}
Note that  $\test{0}{{\mathfrak a}}=R$\,, and that $\{\test{\lambda}{{\mathfrak a}}\}_{\la>0}$ defines a non-increasing, right continuous family of ideals in the parameter $\la$. Indeed:
\begin{enumerate}
\item For all \ $ \lambda\geq {\lambda^{\prime}},\ \test{\lambda}{{\mathfrak a}}\subseteq\test{\lambda^{\prime}}{{\mathfrak a}}\,;$
\item For all \ ${\lambda}$, there exists $\,\varepsilon_{\la}>0$ such that  \ $\test{\lambda}{{\mathfrak a}}=\test{\lambda^{\prime}}{{\mathfrak a}}, \ \text{for } \ {\lambda^{\prime} \in[\lambda,\lambda+\varepsilon_{\la})}\,.$
\end{enumerate}
\end{remark}

\subsubsection{  Jumping numbers are $\F$-thresholds.}
\begin{definition}
The points of discontinuity,  or "jumps",
 $\lambda\in\mathbb{R}_{+}$ for which 
 \[\test{\lambda}{{\mathfrak a}}\neq\test{\,\lambda-\delta\,}{{\mathfrak a}}\ \ \text{ for all } \ \ \delta >0,\] are called \emph{$\F$-jumping numbers of the ideal ${\mathfrak a}$.} The smallest $\F$-jumping number of ${\mathfrak a}$, 
\ $ \fpt{{\mathfrak a}}=\min\{\la\in\mathbb{R}_{+}\,|\ \test{\la}{\mathfrak a}\neq R\},$\ 
is called the \emph{$\F$-pure threshold of the ideal ${\mathfrak a}$\,.}
\end{definition}
\begin{theorem}\cite[3.1]{BMS1}\label{discretejumps}
The set of $\F$-jumping exponents of an ideal ${\mathfrak a}$ of $R$ forms a discrete subset of the rational numbers.
\end{theorem}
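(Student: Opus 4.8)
The plan is to reconstruct the proof of \cite[3.1]{BMS1}. First I would reduce to the case $R=k[x_{1},\dots,x_{n}]$ with $k$ an $\F$-finite field: by Proposition~\ref{propertieserooth}(\ref{4}) the formation of $e^{\textrm{th}}$ root ideals, and hence of $\test{\lambda}{\mathfrak a}$, commutes with localization, so the $\F$-jumping numbers of $\mathfrak a$ are the union of those of $\mathfrak a R_{\mathfrak p}$ over primes $\mathfrak p$; after localizing (and completing if necessary, using Cohen's structure theorem together with \ref{Prop. 2.5 BMS (2008)}, which is insensitive to a polynomial presentation) one may assume $R$ is as stated. From the explicit description in \ref{Prop. 2.5 BMS (2008)} I would extract a \emph{degree bound}: if $\mathfrak a$ is generated in degree $\le d$, then $\test{\lambda}{\mathfrak a}$ is generated in degree $\le d\lambda+d$. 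Indeed $\test{\lambda}{\mathfrak a}=\re{\mathfrak a^{\lceil\lambda p^{e}\rceil}}$ for $e\gg 0$ by Definition~\ref{testidealdef}; the ideal $\mathfrak a^{\lceil\lambda p^{e}\rceil}$ is generated in degree $\le d\lceil\lambda p^{e}\rceil$; and expanding a polynomial of degree $\le D$ in the monomial basis $\mathcal B_{e}$ of \ref{Prop. 2.5 BMS (2008)} and reading off the coefficients $g_{i\mu}$ shows that each generator of a $p^{e}$-th root of such an ideal has degree $\le D/p^{e}$, so $\test{\lambda}{\mathfrak a}$ is generated in degree $\le d\lceil\lambda p^{e}\rceil/p^{e}\le d\lambda+d$.

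Discreteness is then immediate. Fix $N>0$ and set $D=\lceil dN+d\rceil$. Each $\test{\lambda}{\mathfrak a}$ with $\lambda\le N$ is generated in degree $\le D$, hence equals $\bigl(\test{\lambda}{\mathfrak a}\cap R_{\le D}\bigr)R$, where $R_{\le D}$ is the finite-dimensional $k$-space of polynomials of degree $\le D$. The map $I\mapsto I\cap R_{\le D}$ is injective and inclusion-preserving on ideals generated in degree $\le D$; since $\{\test{\lambda}{\mathfrak a}\}$ is non-increasing in $\lambda$ (the Remark after Definition~\ref{testidealdef}), the subspaces $\test{\lambda}{\mathfrak a}\cap R_{\le D}$ form a non-increasing chain in $R_{\le D}$ and so take only finitely many values. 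Thus only finitely many distinct test ideals occur for $\lambda\in[0,N]$, whence only finitely many $\F$-jumping numbers lie in $[0,N]$; as $N$ was arbitrary, the set of $\F$-jumping numbers has no accumulation point.

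The hard part, and the main obstacle, is that every $\F$-jumping number $c$ is rational. Here I would first record the identity $\test{\mu/p}{\mathfrak a}=\bigl(\test{\mu}{\mathfrak a}\bigr)^{[1/p]}$ (from Definition~\ref{testidealdef}, using $\lceil(\mu/p)p^{e}\rceil=\lceil\mu p^{e-1}\rceil$ and the composition rule for root ideals); applied with $\mu=pc$ it shows that $c$ an $\F$-jumping number forces $pc$ to be one as well (otherwise $\test{pc}{\mathfrak a}=\test{pc-\varepsilon}{\mathfrak a}$ for some $\varepsilon>0$, and taking $p$-th roots gives $\test{c}{\mathfrak a}=\test{c-\varepsilon/p}{\mathfrak a}$). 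Next I would identify $c$ with an $\F$-threshold $c^{\mathfrak b}(\mathfrak a)$ for a suitable ideal $\mathfrak b$ (one may take $\mathfrak b=\test{c}{\mathfrak a}$, checking $\mathfrak a\subseteq\operatorname{rad}(\mathfrak b)$ and that $\nu^{\mathfrak b}_{\mathfrak a}(q)=\lceil cq\rceil-1$ for $q\gg 0$), so that $c=\sup_{e}\nu^{\mathfrak b}_{\mathfrak a}(p^{e})/p^{e}$ in the notation of Definition~\ref{fpt}. Finally one proves this supremum is rational using that $q\mapsto\nu^{\mathfrak b}_{\mathfrak a}(q)$ is nearly multiplicative along powers of $p$: flatness of Frobenius on the regular ring $R$ gives $f\notin\mathfrak b^{[q]}\Rightarrow f^{p}\notin\mathfrak b^{[pq]}$, whence $\nu^{\mathfrak b}_{\mathfrak a}(pq)\ge p\,\nu^{\mathfrak b}_{\mathfrak a}(q)$, while a pigeonhole inclusion of the form $\bigl(\mathfrak b^{[q]}\bigr)^{\ell(p-1)+1}\subseteq\mathfrak b^{[pq]}$, with $\ell$ the number of generators of $\mathfrak b$, controls the discrepancy from above. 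A careful analysis of the resulting recursion — the technical heart of \cite[\S3]{BMS1}, and the place where I expect the real work to be, since the naive upper bound grows faster than $p^{e}$ and must be sharpened for or fed the finiteness of $c^{\mathfrak b}(\mathfrak a)$ and the discreteness just proved — shows that $\nu^{\mathfrak b}_{\mathfrak a}(p^{e})/p^{e}$ stabilizes $p$-adically, forcing $c\in\mathbb Q$. Combining the two halves gives that the $\F$-jumping numbers form a discrete subset of $\mathbb Q$.
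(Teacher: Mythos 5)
The statement you are proving is quoted in the paper from \cite[3.1]{BMS1} without proof, so the comparison is against the standard argument there. Your discreteness half is essentially that argument and is correct: the degree bound $\deg g_{i\mu}\le D/p^{e}$ extracted from \ref{Prop. 2.5 BMS (2008)} does give that $\test{\la}{\mathfrak a}$ is generated in degrees $\le d\la+d$, and the chain argument in the finite-dimensional space $R_{\le D}$ (nested subspaces of equal dimension coincide) correctly yields finitely many jumping numbers in each $[0,N]$. (The detour through localization, completion and Cohen's structure theorem is unnecessary for the paper's $R$, which is already a polynomial ring, and completing would actually cost you the grading your degree bound relies on; but this is harmless here.)

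The rationality half, however, has a genuine gap: the decisive step is asserted, not proved. After identifying $c$ with $c^{\mathfrak b}(\mathfrak a)$ for $\mathfrak b=\test{c}{\mathfrak a}$ (which is fine, and is Theorem \ref{threshjumps} of the paper), you appeal to ``a careful analysis of the resulting recursion'' for $\nu^{\mathfrak b}_{\mathfrak a}(p^{e})$ that ``shows'' $p$-adic stabilization --- but that analysis is exactly the content you would need to supply, and the tools you name do not obviously supply it: the pigeonhole inclusion $(\mathfrak b^{[q]})^{\ell(p-1)+1}\subseteq\mathfrak b^{[pq]}$ only gives $\nu^{\mathfrak b}_{\mathfrak a}(pq)\le(\ell(p-1)+1)(\nu^{\mathfrak b}_{\mathfrak a}(q)+1)-1$, an upper bound off by a factor of roughly $\ell$, which does not force convergence to a rational number. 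This is also not how \cite{BMS1} argues. The irony is that you already hold all the pieces of the actual proof: you showed that $c$ a jumping number forces $pc$ to be one; Skoda's Theorem (Theorem \ref{Skoda} of the paper) gives the downward shift, namely if $\mu$ is a jumping number and $\mu-1\ge r$ then $\mu-1$ is a jumping number (otherwise $\test{(\mu-1)}{\mathfrak a}=\test{(\mu-1-\varepsilon)}{\mathfrak a}$ and multiplying by $\mathfrak a$ gives $\test{\mu}{\mathfrak a}=\test{(\mu-\varepsilon)}{\mathfrak a}$); and you proved discreteness. If $c$ were irrational, the numbers $p^{e}c$ would all be jumping numbers with pairwise distinct fractional parts (equal fractional parts would make $(p^{e}-p^{f})c$ an integer), and repeated unit shifts would place infinitely many pairwise distinct jumping numbers in the bounded interval $(r,r+1]$, contradicting the discreteness you established. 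Replacing your unexecuted $\nu$-recursion analysis with this three-line combination closes the gap and recovers the proof of \cite[3.1]{BMS1}.
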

\begin{theorem}\cite[3.4, 6.8]{HY},\cite[2.29, 2.30]{BMS1},\cite[2.7]{MTW}\label{threshjumps}\\
The set of jumping numbers of ${\mathfrak a}$ corresponds to the set of $\F$-thresholds \ ${\{\,
c^{J}({\mathfrak a})\,|\ \, {\mathfrak a}\subseteq \operatorname{rad}(J)
\}.}$
\begin{enumerate}
\item  If  \ ${\mathfrak a}\subseteq \operatorname{rad}(J)\,,$ then\ \ $ \test{c^{J}({\mathfrak a})}{{\mathfrak a}}\subsetneq\test{\la}{{\mathfrak a}}\,,\ \text{for all} \ \ \la<c^{J}({\mathfrak a})\,.$
\item
If $\la$ is an $\F$-jumping number for ${\mathfrak a}$,\, then \ ${\mathfrak a}\subseteq \operatorname{rad}(\test{\lambda}{{\mathfrak a}})\ \text{ and } \ {\la=c^{\test{\la}{{\mathfrak a}}}({\mathfrak a})}\,.$
\end{enumerate}
\end{theorem}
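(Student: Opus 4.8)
\medskip
\noindent\textit{Proof strategy.}\ The plan is to establish a dictionary expressing containments of generalized test ideals in terms of the integers $\nu^{J}_{{\mathfrak a}}(q)$ of Definition \ref{fpt}, and then to read off both assertions from it together with the monotonicity and right continuity of the family $\{\test{\la}{{\mathfrak a}}\}_{\la}$. First I would record, for arbitrary ideals $I,J$ of $R$ and every $q=p^{e}$, the equivalence
\[
\re{I}\subseteq J\qquad\Longleftrightarrow\qquad I\subseteq J^{[q]}:
\]
the implication ``$\Rightarrow$'' follows by raising to the $q$-th Frobenius power and applying part (1) of Proposition \ref{propertieserooth}, while ``$\Leftarrow$'' follows from parts (3) and (2) of that proposition. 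Recalling that $\test{\la}{{\mathfrak a}}=\repe{{\mathfrak a}}$ for $e\gg0$ while $\test{\la}{{\mathfrak a}}\supseteq\repe{{\mathfrak a}}$ for every $e$ (Definition \ref{testidealdef}), I would substitute $I={\mathfrak a}^{\lceil\la q\rceil}$ into the equivalence and use that $\nu^{J}_{{\mathfrak a}}(q)\in\mathbb{Z}$ and that, when ${\mathfrak a}\subseteq\operatorname{rad}(J)$, the set $\{r\mid{\mathfrak a}^{r}\nsubseteq J^{[q]}\}$ is precisely the integer interval $[0,\nu^{J}_{{\mathfrak a}}(q)]$. This yields: $\test{\la}{{\mathfrak a}}\nsubseteq J$ as soon as $\la\le\nu^{J}_{{\mathfrak a}}(p^{e})/p^{e}$ for some $e$, and $\test{\la}{{\mathfrak a}}\subseteq J$ whenever $\la>\nu^{J}_{{\mathfrak a}}(p^{e})/p^{e}$ for all $e$. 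Since $c^{J}({\mathfrak a})=\sup_{e}\nu^{J}_{{\mathfrak a}}(p^{e})/p^{e}$ (Definition \ref{fpt}), the upshot is: $\test{\la}{{\mathfrak a}}\nsubseteq J$ for every $\la<c^{J}({\mathfrak a})$, while $\test{\la}{{\mathfrak a}}\subseteq J$ for every $\la>c^{J}({\mathfrak a})$ and, by right continuity, also for $\la=c^{J}({\mathfrak a})$.

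With this dictionary in hand, (1) is immediate: for ${\mathfrak a}\subseteq\operatorname{rad}(J)$ and $\la<c^{J}({\mathfrak a})$ one has $\test{c^{J}({\mathfrak a})}{{\mathfrak a}}\subseteq J$ but $\test{\la}{{\mathfrak a}}\nsubseteq J$, so the inclusion $\test{c^{J}({\mathfrak a})}{{\mathfrak a}}\subseteq\test{\la}{{\mathfrak a}}$ coming from monotonicity must be strict. For (2), given an $\F$-jumping number $\la$ I would set $J:=\test{\la}{{\mathfrak a}}$; the containment ${\mathfrak a}^{\lceil\la\rceil}\subseteq\test{\la}{{\mathfrak a}}$, which follows from $\lceil\la q\rceil\le\lceil\la\rceil q$ and parts (2),(3) of Proposition \ref{propertieserooth}, gives ${\mathfrak a}\subseteq\operatorname{rad}(J)$, so $c^{J}({\mathfrak a})$ is defined. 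From $\test{\la}{{\mathfrak a}}=J\subseteq J$ and the dictionary ($\test{\mu}{{\mathfrak a}}\nsubseteq J$ for $\mu<c^{J}({\mathfrak a})$) one gets $\la\ge c^{J}({\mathfrak a})$; and if $\la>c^{J}({\mathfrak a})$, then for $\mu\in(c^{J}({\mathfrak a}),\la)$ both $\test{\mu}{{\mathfrak a}}\subseteq J=\test{\la}{{\mathfrak a}}$ and $\test{\la}{{\mathfrak a}}\subseteq\test{\mu}{{\mathfrak a}}$ hold, so $\la'\mapsto\test{\la'}{{\mathfrak a}}$ is constant on $(c^{J}({\mathfrak a}),\la]$, contradicting that $\la$ is an $\F$-jumping number; hence $\la=c^{J}({\mathfrak a})=c^{\test{\la}{{\mathfrak a}}}({\mathfrak a})$. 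The asserted correspondence of sets then drops out: (1) applied with $\la=c^{J}({\mathfrak a})-\delta$ shows each admissible $c^{J}({\mathfrak a})$ is an $\F$-jumping number, and (2) realizes each $\F$-jumping number as some $c^{J}({\mathfrak a})$.

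The step I expect to carry the real content is the first one: getting the equivalence $\test{\la}{{\mathfrak a}}\nsubseteq J\Leftrightarrow\la\le\nu^{J}_{{\mathfrak a}}(q)/q$ right, with the ceilings handled carefully, verifying that $\{\repe{{\mathfrak a}}\}_{e}$ is an ascending chain --- this uses $\lceil\la p^{e+1}\rceil\le p\lceil\la p^{e}\rceil$ together with Proposition \ref{propertieserooth} --- so that ``for some $e$'' and ``for $e\gg0$'' coincide, and observing that $c^{J}({\mathfrak a})$ is finite exactly because ${\mathfrak a}\subseteq\operatorname{rad}(J)$. Once the dictionary is set up, (1) and (2) are short and formal, using only monotonicity and right continuity of $\{\test{\la}{{\mathfrak a}}\}_{\la}$; in particular the argument does not seem to require the discreteness of $\F$-jumping numbers (Theorem \ref{discretejumps}).
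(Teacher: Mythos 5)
The paper gives no proof of this theorem — it is quoted from \cite{HY}, \cite{BMS1} and \cite{MTW} — and your argument is correct and essentially reproduces the standard approach of \cite[2.29, 2.30]{BMS1}: the dictionary $\test{\la}{{\mathfrak a}}\subseteq J \iff \la > \nu^{J}_{{\mathfrak a}}(p^e)/p^e$ for all $e$ (with right continuity handling $\la=c^{J}({\mathfrak a})$ and the ascending-chain property of the $e^{\text{th}}$ roots making ``some $e$'' and ``$e\gg 0$'' agree), from which (1), (2) and the set-theoretic correspondence follow exactly as you say. The only point worth making explicit is that for an $\F$-jumping number $\la$ the ideal $J=\test{\la}{{\mathfrak a}}$ is proper (since $\test{\la}{{\mathfrak a}}\subsetneq\test{(\la-\delta)}{{\mathfrak a}}\subseteq R$), so that $\nu^{J}_{{\mathfrak a}}(q)$ and $c^{J}({\mathfrak a})$ are genuinely defined.
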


\subsubsection{Brian\c con-Skoda type theorems.}
\begin{theorem}[Skoda's Theorem]\cite[2.25]{BMS1}\label{Skoda}
Let $R$ be a Noetherian regular ring of prime characteristic $p$, and ${\mathfrak a}$ be an ideal generated by $r$ elements. \\ For all \ $\lambda \geq r\,,$ \ \ $\test{\lambda} {{\mathfrak a}}
={{\mathfrak a}}
\,\cdot\,
\test{(\lambda-1)} {{\mathfrak a}}
\subseteq {{\mathfrak a}}^{\lfloor\lambda\rfloor-r+ 1}\!.$
\end{theorem}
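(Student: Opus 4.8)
The plan is to first establish the Skoda identity $\test{\lambda}{{\mathfrak a}}={{\mathfrak a}}\cdot\test{(\lambda-1)}{{\mathfrak a}}$ for every real $\lambda\geq r$ — by a two-sided inclusion between the stabilized $e$-th root expressions — and then to deduce the Brian\c con--Skoda containment by iterating it. Write ${\mathfrak a}=(a_{1},\dots,a_{r})$ (we may assume $r\geq 1$, the case $r=0$ being trivial) and set $q=p^{e}$. By Definition~\ref{testidealdef} we may fix a single $e\gg 0$ for which both $\test{\lambda}{{\mathfrak a}}=\repeq{{\mathfrak a}}$ and $\test{(\lambda-1)}{{\mathfrak a}}=\req{{\mathfrak a}^{\lceil(\lambda-1)q\rceil}}$; observe also that $\lceil\lambda q\rceil=q+\lceil(\lambda-1)q\rceil$, since $q\in\mathbb{Z}$. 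Two elementary ingredients do all the work. First, a projection formula $\req{{\mathfrak c}^{[q]}{\mathfrak b}}={\mathfrak c}\cdot\req{{\mathfrak b}}$ for ideals ${\mathfrak b},{\mathfrak c}$ of $R$, which falls straight out of the presentation of $e$-th roots in~\ref{Prop. 2.5 BMS (2008)}: writing ${\mathfrak b}=(b_{i})$ with $b_{i}=\sum_{\mu}g_{i\mu}^{q}\mu$ in a basis of $R$ over $R^{q}$ and ${\mathfrak c}=(c_{j})$, the ideal ${\mathfrak c}^{[q]}{\mathfrak b}$ is generated by the elements $c_{j}^{q}b_{i}=\sum_{\mu}(c_{j}g_{i\mu})^{q}\mu$. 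Second, the pigeonhole estimate ${\mathfrak a}^{N}\subseteq{\mathfrak a}^{[q]}\cdot{\mathfrak a}^{N-q}$, valid for every integer $N\geq r(q-1)+1$, because any monomial $a_{1}^{d_{1}}\cdots a_{r}^{d_{r}}$ of degree $N$ must have some $d_{j}\geq q$ and hence factors as $a_{j}^{q}$ times a monomial of degree $N-q$.

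For the inclusion ${{\mathfrak a}}\cdot\test{(\lambda-1)}{{\mathfrak a}}\subseteq\test{\lambda}{{\mathfrak a}}$ — which in fact needs only $\lambda\geq 1$ — I would combine the projection formula with ${\mathfrak a}^{[q]}\subseteq{\mathfrak a}^{q}$: then ${{\mathfrak a}}\cdot\req{{\mathfrak a}^{\lceil(\lambda-1)q\rceil}}=\req{{\mathfrak a}^{[q]}{\mathfrak a}^{\lceil(\lambda-1)q\rceil}}\subseteq\req{{\mathfrak a}^{q+\lceil(\lambda-1)q\rceil}}=\req{{\mathfrak a}^{\lceil\lambda q\rceil}}\subseteq\test{\lambda}{{\mathfrak a}}$, where the middle inclusion uses monotonicity of $e$-th roots, Proposition~\ref{propertieserooth}(\ref{3}). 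For the reverse inclusion I would invoke the hypothesis $\lambda\geq r$ to get $N:=\lceil\lambda q\rceil\geq\lambda q\geq rq\geq r(q-1)+1$, so that the pigeonhole estimate, Proposition~\ref{propertieserooth}(\ref{3}), and the projection formula give
\[
\repeq{{\mathfrak a}}=\req{{\mathfrak a}^{N}}\subseteq\req{{\mathfrak a}^{[q]}{\mathfrak a}^{N-q}}={{\mathfrak a}}\cdot\req{{\mathfrak a}^{N-q}}={{\mathfrak a}}\cdot\req{{\mathfrak a}^{\lceil(\lambda-1)q\rceil}}\subseteq{{\mathfrak a}}\cdot\test{(\lambda-1)}{{\mathfrak a}}.
\]
Putting the two inclusions together yields the identity $\test{\lambda}{{\mathfrak a}}={{\mathfrak a}}\cdot\test{(\lambda-1)}{{\mathfrak a}}$ for all $\lambda\geq r$.

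To finish, I would iterate the identity at the level of ideals. Since $r\in\mathbb{Z}$, applying it successively at the parameters $\lambda,\lambda-1,\dots,\lambda-j+1$ — all of which are $\geq r$ exactly when $j\leq\fl{\lambda}-r+1$ — gives $\test{\lambda}{{\mathfrak a}}={{\mathfrak a}}^{\,j}\cdot\test{(\lambda-j)}{{\mathfrak a}}$ with $j=\fl{\lambda}-r+1$. The residual parameter $\lambda-j=\lambda-\fl{\lambda}+r-1$ lies in $[r-1,r)$, hence is $\geq 0$, so $\test{(\lambda-j)}{{\mathfrak a}}\subseteq R$ and therefore $\test{\lambda}{{\mathfrak a}}\subseteq{{\mathfrak a}}^{\,\fl{\lambda}-r+1}$, as asserted.

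I expect the only genuine content to be the second ingredient: the containment ${\mathfrak a}^{N}\subseteq{\mathfrak a}^{[q]}{\mathfrak a}^{N-q}$ is the positive-characteristic shadow of the classical Brian\c con--Skoda bound, and it is precisely where the number $r$ of generators — and hence the threshold $\lambda\geq r$ — enters. The projection formula and the bookkeeping with ceilings and floors are routine; the one point requiring care is the choice of a single exponent $e$ for which all the root expressions involved have already stabilized to the relevant test ideals, together with the observation that regularity of $R$ makes it flat, hence $\cap$-flat, over $R^{q}$, which is what legitimizes~\ref{Prop. 2.5 BMS (2008)} (and one may reduce to the $\F$-finite case in any event).
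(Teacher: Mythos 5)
Your proof is correct, but it is more self-contained than the paper's: where the paper simply quotes the identity $\test{\lambda}{{\mathfrak a}}={\mathfrak a}\cdot\test{(\lambda-1)}{{\mathfrak a}}$ from \cite[2.25]{BMS1} and devotes its proof entirely to the iteration, you re-derive that identity from the $e$-th root calculus — the projection formula $\req{{\mathfrak c}^{[q]}{\mathfrak b}}={\mathfrak c}\cdot\req{{\mathfrak b}}$ together with the pigeonhole containment ${\mathfrak a}^{N}\subseteq{\mathfrak a}^{[q]}{\mathfrak a}^{N-q}$ for $N\geq r(q-1)+1$ — which is essentially reproducing the argument of \cite{BMS1} itself. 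Your two inclusions are sound (the bookkeeping $\lceil\lambda q\rceil=q+\lceil(\lambda-1)q\rceil$ and $\lceil\lambda q\rceil\geq rq\geq r(q-1)+1$ is right, granted $r\geq1$, and you correctly dispose of $r=0$), and your iteration down to a parameter in $[r-1,r)$ with $j=\fl{\lambda}-r+1$ steps is the same argument as the paper's, which instead first passes to $\fl{\lambda}$ by monotonicity and then inducts with $\ell=\fl{\lambda}-r$; the difference is purely cosmetic. What your route buys is independence from the cited black box, at the price of needing the projection formula in full generality: your basis-presentation argument via \ref{Prop. 2.5 BMS (2008)} literally applies only when $R$ is free over $R^{q}$ (e.g.\ a polynomial ring over an $\F$-finite field, which is the setting actually used later in the paper), and for a general Noetherian regular ring one must invoke \cite[Lemma 2.4]{BMS1} or the reduction you mention — a caveat you flag, so no gap results.
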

 \begin{proof} From \cite[2.25]{BMS1} one has $\tau\left( \lambda \bullet {{\mathfrak a}}\,\right)
={{\mathfrak a}}
\,\cdot\,
\tau\left( (\lambda-1) \bullet {{\mathfrak a}}\,\right)$ \ for all \ $\lambda \geq r.$ Letting $\la=r,$ one retrieves
 $ 
 \test{r}{{\mathfrak a}}={{\mathfrak a}}
\,\cdot\,
\tau\left( (r-1) \bullet {{\mathfrak a}}\,\right)\subseteq {\mathfrak a}\,.
$ \ 
An induction argument shows that for all  $\ell\in\mathbb{N}_{0}\,,$
\ $
\test{(r+\ell)\,}{{\mathfrak a}}
={{\mathfrak a}}^{\ell+1}\,\cdot\,\test{(r-1)}{{\mathfrak a}}
\subseteq {{\mathfrak a}}^{\ell+1}.
$
\ 
For any $\la \geq r\,,$ letting $\ell=\fl{\la}-r$ one has,
\[
\test{\la}{{\mathfrak a}}\subseteq
\test{\fl{\la}}{{\mathfrak a}}
=
\test{r+ (\fl{\la}-r)}{{\mathfrak a}}
\subseteq
 {{\mathfrak a}}^{\fl{\la}-r+1}\,.\qedhere
\]
\end{proof}
\noindent The following Lemma, on test ideals of ideals generated by indeterminates, can also be derived (for $F$-finite rings) from \cite[Cor 5.9]{HY} and Proposition \ref{propertieserooth}(6), in view of \cite[2.22]{BMS2}. 
\begin{lemma}\label{all1bys}
Let $R=k[z_{1},\dots,z_{s}]$ be a polynomial ring over a field with ${{\operatorname{char}} \,k=p}$. If \  ${\mathfrak a}=(z_{1},\!\dots,z_{r})$ \ then \ $\fpt{{\mathfrak a}}=r$ \ and 
\ \ $\test{\la}{{\mathfrak a}}\,=\,{{\mathfrak a}}^{\fl{\la}-r+1}\ \ \text{for all } \ \la\,\geq\, r-1\,.$
\end{lemma}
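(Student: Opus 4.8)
The plan is to reduce the whole statement to the single assertion that $\test{\mu}{\mathfrak{a}}=R$ for every $\mu<r$ (equivalently $\fpt{\mathfrak{a}}\ge r$), and then let Skoda's Theorem \ref{Skoda} take care of the rest. Granting that assertion: since $\mathfrak{a}$ is generated by $r$ elements, Theorem \ref{Skoda} gives $\test{t}{\mathfrak{a}}=\mathfrak{a}\cdot\test{t-1}{\mathfrak{a}}$ for every real $t\ge r$. Iterating this $j:=\fl{\lambda}-r+1$ times — valid as long as the running parameter stays $\ge r$, i.e. as long as $j\le\lambda-r+1$, and $\fl{\lambda}-r+1$ is the largest such integer because $r\in\mathbb{Z}$ — one obtains $\test{\lambda}{\mathfrak{a}}=\mathfrak{a}^{\,j}\cdot\test{\lambda-j}{\mathfrak{a}}$ with $\lambda-j=\{\lambda\}+r-1\in[r-1,r)$. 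Since $\lambda-j<r$, the reduced assertion yields $\test{\lambda-j}{\mathfrak{a}}=R$, hence $\test{\lambda}{\mathfrak{a}}=\mathfrak{a}^{\fl{\lambda}-r+1}$ for all $\lambda\ge r$; and for $\lambda\in[r-1,r)$ one has $\fl{\lambda}-r+1=0$ while $\test{\lambda}{\mathfrak{a}}=R=\mathfrak{a}^{0}$ directly, so the displayed formula holds for all $\lambda\ge r-1$. Finally $\fpt{\mathfrak{a}}\ge r$ combined with $\test{r}{\mathfrak{a}}=\mathfrak{a}\subsetneq R$ forces $\fpt{\mathfrak{a}}=r$.

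It remains to show $\test{\mu}{\mathfrak{a}}=R$ for $0\le\mu<r$. By Definition \ref{testidealdef} it is enough to prove $(\mathfrak{a}^{\lceil\mu p^{e}\rceil})^{[1/p^{e}]}=R$ for all $e\gg0$. Write $q=p^{e}$; recall $\mathfrak{a}^{n}=(z_{1},\dots,z_{r})^{n}$ is generated by the monomials $z_{1}^{a_{1}}\cdots z_{r}^{a_{r}}$ with $a_{1}+\dots+a_{r}=n$. Using the monomial basis recalled in \S\ref{Prop. 2.5 BMS (2008)} (and arranging, as we may, that $1\in\mathcal{B}_{k}$), for each such monomial one has $z^{a}=(z^{\fl{a/q}})^{q}\,z^{a\bmod q}$ with $z^{a\bmod q}$ a basis element, so its $e$-th root ideal is $(z_{1}^{\fl{a_{1}/q}}\cdots z_{r}^{\fl{a_{r}/q}})$. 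Since the $e$-th root ideal of a sum of ideals is the sum of the $e$-th root ideals — immediate from the generating-set description in \S\ref{Prop. 2.5 BMS (2008)} — the ideal $(\mathfrak{a}^{n})^{[1/q]}$ is generated by all $z_{1}^{\fl{a_{1}/q}}\cdots z_{r}^{\fl{a_{r}/q}}$ with $\sum a_{i}=n$. Choosing $a$ with $a_{i}\le q-1$ for every $i$ — possible exactly when $n\le r(q-1)$ — produces the generator $1$, so $(\mathfrak{a}^{n})^{[1/q]}=R$. With $n=\lceil\mu q\rceil\le\mu q+1$ and $\mu<r$, the inequality $n\le r(q-1)$ holds for all $q\gg0$ (since $(r-\mu)q\ge r+1$ eventually), giving $\test{\mu}{\mathfrak{a}}=R$ as required.

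I expect the only mildly delicate point to be the bookkeeping in the second paragraph: computing the $e$-th root of a monomial correctly (the role of $a\bmod q$ and the choice of basis), and verifying that $n\le r(q-1)$ is genuinely met for all large $e$ when $n=\lceil\mu q\rceil$ — both routine, but worth stating carefully. Everything else is a formal consequence of Skoda's Theorem \ref{Skoda} once $\fpt{\mathfrak{a}}\ge r$ is secured. One could instead bypass Theorem \ref{Skoda} and compute $(\mathfrak{a}^{\lceil\lambda p^{e}\rceil})^{[1/p^{e}]}$ directly by identifying it with $\mathfrak{a}^{\,\max\{0,\ \lceil(\lceil\lambda q\rceil-r(q-1))/q\rceil\}}$ and letting $e\to\infty$, but routing the argument through Skoda keeps the monomial combinatorics to a minimum.
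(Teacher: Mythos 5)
Your proof is correct, and it uses the same two ingredients as the paper — the monomial-basis computation of $e$-th roots from \S\ref{Prop. 2.5 BMS (2008)} and Skoda's Theorem \ref{Skoda} — but it distributes the work differently. The paper proves the inclusion ${\mathfrak a}^{\fl{\la}-r+1}\subseteq\test{\la}{{\mathfrak a}}$ directly for \emph{every} $\la\geq r-1$: for $g\in{\mathfrak a}^{\ell}$ with $\ell=\fl{\la}-r+1$ and $q\gg0$ it notes that $g^{q}\,(z_{1}\cdots z_{r})^{q-1}\in{\mathfrak a}^{\lceil\la q\rceil}$ while $(z_{1}\cdots z_{r})^{q-1}$ is a basis element, so $g$ lies in the $e$-th root; Skoda is then invoked only for the reverse containment $\test{\la}{{\mathfrak a}}\subseteq{\mathfrak a}^{\fl{\la}-r+1}$ when $\la\geq r$. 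You instead verify directly only the triviality $\test{\mu}{{\mathfrak a}}=R$ for $\mu<r$ (your choice of a degree-$\lceil\mu q\rceil$ monomial with all exponents at most $q-1$ is the same basis-element observation, specialized to showing $1$ is a coefficient), and you recover the full formula for $\la\geq r$ by iterating the equality $\test{t}{{\mathfrak a}}={\mathfrak a}\cdot\test{(t-1)}{{\mathfrak a}}$ of Theorem \ref{Skoda}; your bookkeeping there ($j=\fl{\la}-r+1$, $\la-j\in[r-1,r)$, then $\fpt{{\mathfrak a}}=r$ from $\test{r}{{\mathfrak a}}={\mathfrak a}\neq R$) is accurate. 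The trade-off: you lean on the multiplicative (equality) part of Skoda to produce the lower inclusion above $r$, whereas the paper extracts that inclusion from one extra line of the same root computation and needs only the containment part of Skoda; both routes are legitimate, and yours has the merit of isolating $\fpt{{\mathfrak a}}\geq r$ as the single computational input.
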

   \begin{proof} 
Letting \  $\la\geq r-1$ and $\ell=\fl{\la}-r+1$, we let $e\geq \, {\log}_{p}\, {({r}/{r+\ell-\la})}, \ {q=p^e}$ \  and  the monomial \ ${\mu= {(z_1\, z_2\,\dots\, z_r)}^{q-1}}\!$. Using notation set in \ref{Prop. 2.5 BMS (2008)}, one has ${\mu\in \,\mathcal{B}_{e}}$, and given ${g\in{{\mathfrak a}}^{\ell}}$, \ one has\ $ {g}^{q}\cdot \mu
\,\in\, {{{\mathfrak a}}^{(r+\ell-\frac{r}{q})\cdot q}}\subseteq  {{{\mathfrak a}}^{\la\cdot q}}$.\ \ 
 As discussed in \ref{Prop. 2.5 BMS (2008)}, ${g}\in \reqs{{{\mathfrak a}}^{\la\cdot q}},$\ \ thus\ ${\mathfrak a}^{\ell}\subseteq\reqs{{{\mathfrak a}}^{\la\cdot q}}$. Considering large enough values of $q$ yields \ ${{\mathfrak a}^{\ell}\subseteq\test{\la}{{\mathfrak a}}}.$ \ For $\la<r$, it implies that \ $R\subseteq\test{\la}{{\mathfrak a}}$,\ thus \ $\test{\la}{{\mathfrak a}}=R.$\  Skoda's Theorem \eqref{Skoda}, establishes the reverse inclusion when  $\la\geq r$, \ and we conclude that \ ${\test{\la}{{\mathfrak a}}={{\mathfrak a}}^{\ell}}\,= \, {{\mathfrak a}}^{\lfloor{\la}\rfloor -r+1}$\ and  \ $\fpt{{\mathfrak a}}=r.$
 \end{proof}
  
\section{Generalized test ideals of Determinantal ideals}
Throughout this section we let $R$ be the polynomial ring $k[x_{11},\dots,x_{mn}]$, over a field $k$ of prime characteristic $p$, and let $I=I_{_{t}}(\!X_{_{m\times n}}\!)$ be the ideal generated by $t$-minors of an ${m\times n}$ matrix of indeterminates ${X_{_{m\times n}}\!\!=\left[\,x_{i,j}\,\right]}$.  We give a complete characterization of the test ideals, when $t=m$, and of  the set of $\F$-thresholds as the set of all integers greater than or equal to the $\F$-pure threshold $\fpt{I}$.
\subsection{Elementary row operations.}\cite{BV}\label{BV}
Consider  the matrix of indeterminates 
\begin{equation}\tag{$\star$}\label{row}
\underline{Y\!}\,=\left[\,y_{i,j}\,|\ \tiny{2\,\leq\, i\,\leq\, m-1\,,\, 2\,\leq j \,\leq n-1}\,\,\right]
 \end{equation} 
and let  \ $I^{\prime}=I_{m-1}(\,\underline{Y\!}\,)$ \ be the ideal generated by its ${(t-1)}$-minors in $k[\,\underline{Y\!}\,]\,.$
   Let \[ T=k[\{\,y_{ij},\,|\ 2\leq i\leq m-1,\, 2\leq j \leq n-1\}]\,[x_{11},\cdots,x_{1n},x_{21},\cdots,x_{m1}]\,\,[\,x_{11}^{-1}\,]\]
 and \ $S=k[x_{11},\cdots,x_{mn}]\,[\,x_{11}^{-1}\,]\,.$ \ The  isomorphism $\varphi:  S\rightarrow T$  induced by:
\[\{x_{ij}\ \mapsto\  y_{ij}+x_{1j}\,x_{i1}\,x_{11}^{-1},\quad
x_{1j}\ \mapsto\ x_{1j},\quad
x_{i1}\ \mapsto\ x_{i1}\}_{\ 2\leq i\leq m-1,\  2\leq j \leq n-1}\]
has inverse,\, $\psi:\  T\rightarrow S$, \ induced by the reverse substitution:
\[\{y_{ij}\ \mapsto\  x_{ij}-x_{1j}\,x_{i1}\,x_{11}^{-1},\quad
x_{1j}\ \mapsto\ x_{1j},\quad
x_{i1}\ \mapsto\ x_{i1}\}_{\ 2\leq i\leq m-1,\  2\leq j \leq n-1}.\]
Note that $I^{(\ell)}=I^{\ell}$ and ${I^{\prime}}^{\, (\ell)}={I^{\prime}}^{\, \ell}$ for all \ $\ell\in\mathbb{N},$ as symbolic powers coincide with the usual powers for ideals generated by of maximal minors of a matrix of indeterminates. For all \ $\ell\in\mathbb{N},$ \ $I^{\ell}\,=\, I^{\ell}S\,\cap\,R$\quad\text{and}\quad$ I^{\ell}S= {\psi\left( I^{\prime} T\right)}^{\, \ell}=\psi\left({ I^{\prime} }^{\, \ell}\, T\right)\!, $ { as follows from the discussions in \cite[\S 10]{BV}.  

\subsection{The $\F$-pure threshold of determinantal ideals}
\text{}\\
We recall the following result of Miller, Singh and Varbaro yielding a formula for the $\F$-pure threshold of a  determinantal ideal \cite{MSV}:
\begin{theorem}\cite{MSV}\label{MSV}
\ Let $R=k[x_{11},\dots,x_{mn}]$ where $k$ is a field of positive characteristic, and let $I=I_{_{t}}(\!X_{_{m\times n}}\!)$ be the ideal generated by $t$-minors of the $\underline{X\!}$. The $\F$-pure threshold of $I$ is determined by:\[ {\fpt I}={\min\Big\{ \frac{(n-k)(m-k)}{(t-k)}\, \Big|\ k=0,\dots,t-1\,\Big\}}\,.\]
\end{theorem}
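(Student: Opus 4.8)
Since $I$ is homogeneous, its test ideals are homogeneous, so $\test{\lambda}{I}=R$ exactly when $\test{\lambda}{I}\not\subseteq\mathfrak m$ for the graded maximal ideal $\mathfrak m=(x_{11},\dots,x_{mn})$; combined with Theorem~\ref{threshjumps} this identifies the first $\F$-jumping number with the $\F$-threshold $c^{\mathfrak m}(I)$, i.e. $\fpt{I}=c^{\mathfrak m}(I)=\lim_{q\to\infty}\nu^{\mathfrak m}_I(q)/q$ where $\nu^{\mathfrak m}_I(q)=\max\{r\,:\,I^r\not\subseteq\mathfrak m^{[q]}\}$. Because $\mathfrak m^{[q]}=(x_{ij}^{q})$ is a monomial ideal, a homogeneous $f$ lies outside $\mathfrak m^{[q]}$ iff its support contains a monomial with all exponents $\le q-1$. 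So the task is reduced to a single asymptotic question: how many $t$‑minors can one multiply together and still stay out of $\mathfrak m^{[q]}$?

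For the lower bound I would proceed purely combinatorially, using a diagonal term order $\<$ on $R$, for which the initial term of the $t$-minor $[a_1,\dots,a_t\,|\,b_1,\dots,b_t]$ (with $a_1<\cdots<a_t$, $b_1<\cdots<b_t$) is the main‑diagonal monomial $x_{a_1b_1}\cdots x_{a_tb_t}$; such monomials are exactly those supported on a length-$t$ chain of the poset $[m]\times[n]$ under the coordinatewise order. Given a multiset $C_1,\dots,C_r$ of length-$t$ chains in which every cell $(i,j)$ occurs at most $q-1$ times, take the corresponding $t$-minors $\Delta_1,\dots,\Delta_r$; since $\ini{(fg)}=\ini{f}\,\ini{g}$ for any term order, $\ini{(\Delta_1\cdots\Delta_r)}=\prod_\ell\ini{\Delta_\ell}$ is a monomial with all exponents $\le q-1$, so no cancellation can remove it and $\Delta_1\cdots\Delta_r\notin\mathfrak m^{[q]}$, whence $I^r\not\subseteq\mathfrak m^{[q]}$. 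Thus $\nu^{\mathfrak m}_I(q)$ is at least the maximal size of such a chain‑packing, and $\fpt{I}\ge\gamma$, the optimal rate of packing length-$t$ chains of $[m]\times[n]$ with unit cell loads. A linear‑programming duality argument pins down $\gamma$: along a length-$t$ chain both coordinates strictly increase, so at most $k$ of the $t$ cells can have first coordinate $\le k$ or second coordinate $\le k$; hence at least $t-k$ of them lie in the $(m-k)\times(n-k)$ block of cells with both coordinates $>k$. Weighting that block by $1/(t-k)$ and everything else by $0$ gives a fractional cover of cost $(m-k)(n-k)/(t-k)$, so $\gamma\le\min_{0\le k\le t-1}(m-k)(n-k)/(t-k)$; and for the $k$ attaining the minimum one writes down an explicit optimal packing (e.g. a block/recursive construction pairing a fixed length-$k$ chain in the first $k$ rows and columns with an optimal packing of $(t-k)$-chains of the complementary $(m-k)\times(n-k)$ grid, each repeated $q-1$ times), giving $\gamma=\min_{0\le k\le t-1}(m-k)(n-k)/(t-k)$.

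For the upper bound I would appeal to the general comparison $\fpt{I}\le\lct{I}$ between the $\F$-pure threshold of a reduction mod $p$ and the log canonical threshold of the characteristic-zero ideal (the ideal version of the Hara--Watanabe inequality recalled in the introduction, cf.\ \cite{HY}), together with the classical computation of $\lct{I_t(X_{m\times n})}$ via the standard log resolution of the generic determinantal pair: successively blowing up the rank-$\le k$ loci $V(I_{k+1}(X))$ for $k=0,\dots,t-1$ makes $I_t(X)$ invertible, equal to $\prod_k\mathcal O(-(t-k)E_k)$, where $E_k$ has discrepancy $(m-k)(n-k)-1$ over $\mathbb A^{mn}$; hence $\lct{I}=\min_k\big((\operatorname{disc}E_k+1)/\operatorname{ord}_{E_k}I\big)=\min_{0\le k\le t-1}(m-k)(n-k)/(t-k)$. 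Chaining the two bounds, $\lct{I}\ge\fpt{I}=c^{\mathfrak m}(I)\ge\gamma=\lct{I}$, forces equality throughout; in particular $\fpt{I}$ equals the stated minimum and, both bounds being characteristic-free, is independent of $p$.

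The delicate point is the upper bound, and specifically the fact that it is \emph{not} detected by monomial bookkeeping: the naive statement ``once $r$ exceeds the maximal packing number of arbitrary (non-monotone) $t$-transversals, every product of $r$ $t$-minors lies in $\mathfrak m^{[q]}$ term by term'' is false and not tight — already for three $2$-minors of a $2\times3$ matrix a transversal monomial with all exponents $<q$ can appear in the formal expansion yet cancel, in every characteristic, so that the product does lie in $\mathfrak m^{[q]}$. A self-contained positive-characteristic proof of the upper bound therefore has to control precisely these cancellations — for instance via a standard-monomial basis of $I^r$, showing that a standard product of minors whose diagonal initial term lies in $\mathfrak m^{[q]}$ must itself lie there — and it is this cancellation phenomenon, not the packing combinatorics, that is the real obstacle; invoking the characteristic-zero value of $\lct{I}$ is what lets one avoid it.
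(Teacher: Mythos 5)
You should first note that the paper does not prove Theorem \ref{MSV} at all: it is quoted from \cite{MSV}, so your attempt can only be judged on its own terms, and as it stands it has a genuine gap in the lower bound for $1<t<m$. The claim that the asymptotic packing rate $\gamma$ of length-$t$ chains in $[m]\times[n]$ equals $\min_k (m-k)(n-k)/(t-k)$ is false. Take $m=n=3$, $t=2$, where the formula gives $4$: every $2$-chain avoids the cells $(1,3)$ and $(3,1)$ (its smaller cell has both coordinates $\leq 2$, its larger cell both coordinates $\geq 2$), so assigning weight $1/2$ to the remaining seven cells is a fractional cover of cost $7/2$, and $\gamma=7/2$ exactly (weight $1/2$ on the chains $\{(1,1),(2,2)\}$, $\{(2,2),(3,3)\}$, $\{(1,1),(3,3)\}$ and weight $1$ on $\{(1,2),(2,3)\}$, $\{(2,1),(3,2)\}$ attains it). Equivalently, $\fpt{\ini{I_2}}=7/2<4=\fpt{I_2}$. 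Your proposed ``block/recursive'' packing indeed cannot exist: for $k=1$ it would require about $4(q-1)$ length-one prefixes inside the first row and column, but only $(1,1),(1,2),(2,1)$ can serve as prefixes, giving at most $3(q-1)$. So tracking products of $t$-minors through their diagonal initial terms proves only $\fpt{I}\geq 7/2$ in this example; the elements of $I^r$ that witness $\nu^{\mathfrak m}_I(q)$ for intermediate $t$ are precisely the ones invisible to this monomial bookkeeping --- the cancellation phenomenon you correctly isolate as the obstacle for the upper bound equally defeats your lower bound, and this is exactly where the real work in \cite{MSV} lies. (For $t=m$ your packing does achieve $n-m+1$; it is the element $\Delta$ used in the proof of Theorem \ref{Main}, so the maximal-minor case of your lower bound is fine.)

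There is also a secondary problem with the upper bound: the comparison $\fpt{I_p}\leq\lct{I}$ you invoke is, in the references recalled in the paper (Hara--Watanabe, Hara--Yoshida), a statement for $p\gg 0$, since it relies on reduction mod $p$ of a log resolution together with vanishing/comparison results that are only guaranteed for large $p$; Theorem \ref{MSV} asserts the formula in \emph{every} positive characteristic. As written, your argument would at best give the formula for all sufficiently large $p$ and independence of $p$ only asymptotically. To get all $p$ you would need either a characteristic-$p$ divisorial bound (containment of $\test{\la}{I}$ in the ideal computed on the fixed, $\mathbb{Z}$-defined resolution of the determinantal pair, which requires an argument beyond the limit statement in the introduction) or the elementary, characteristic-free argument of \cite{MSV} itself. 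The preliminary reductions you make --- $\fpt{I}=c^{\mathfrak m}(I)$ for the homogeneous ideal $I$, and the criterion for non-membership in the monomial ideal $\mathfrak m^{[q]}$ via a single surviving monomial --- are correct.
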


This formula was first used to describe the log canonical threshold of a determinantal variety in  \cite{John}, and conveys that the connection between these
invariants is stronger than the current results may suggest (see \cite[3.4]{HY}, \cite{ST13}).
\subsection{Generalized test ideals of determinantal ideals of maximal minors}\text{}\\
We proceed to state the main result of the paper, yielding a complete description of the generalized test ideals of a determinantal ideal of maximal minors.
\begin{theorem}\label{Main}
Let $R=k[x_{11},\dots,x_{mn}]$ be a polynomial ring over a field $k$ of positive characteristic,
and let $I=I_{_{m}}(\!X_{_{m\times n}}\!)$ be the ideal generated by $m$-minors of $\underline{X\!}$.
The set of \,$\F$-thresholds of $I$ is \ $\{\,n\in\mathbb{N}\,|\ \, n\geq \fpt{I}\,\}$ and for all  \ $\la\geq\fpt{I}-1\,,$ \ \ \[{\test{\la}{I}=I^{\lfloor{\la}\rfloor  - \fpt{I}+1}}.\]
 \end{theorem}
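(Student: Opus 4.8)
The plan is to pin down $\test{\la}{I}$ by two opposite inclusions and then read off the $\F$-thresholds. Two facts will drive everything: by Theorem~\ref{MSV} (taken with $t=m$) one has $\fpt{I}=n-m+1$, and this number equals $\operatorname{ht}(I)$; so, since ordinary and symbolic powers of $I$ coincide (as recalled in~\ref{BV}), the localization $R_{I}$ is a regular local ring of dimension $\fpt{I}$ whose maximal ideal $\mathfrak p=IR_{I}$ is generated by $\fpt{I}$ elements, and $I^{\ell}=I^{(\ell)}=I^{\ell}R_{I}\cap R$ for every $\ell$. Write $\ell=\fl{\la}-\fpt{I}+1$; in the range $\fpt{I}-1\le\la<\fpt{I}$ one has $\ell=0$ and $\test{\la}{I}=R=I^{0}$, so I may assume $\la\ge\fpt{I}$ throughout.

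For the inclusion $\test{\la}{I}\subseteq I^{\ell}$ I would localize at $I$. Generalized test ideals commute with localization (Proposition~\ref{propertieserooth}(4), applied to the defining expression $\test{\la}{I}=\repe{I}$ for $e\gg0$), so $\test{\la}{I}\,R_{I}=\test{\la}{\mathfrak p}$; since $\mathfrak p$ is generated by $\fpt{I}$ elements and $\la\ge\fpt{I}$, Skoda's Theorem~\ref{Skoda} gives $\test{\la}{\mathfrak p}\subseteq\mathfrak p^{\fl{\la}-\fpt{I}+1}=\mathfrak p^{\ell}=I^{\ell}R_{I}$, and intersecting back with $R$ yields $\test{\la}{I}\subseteq I^{\ell}R_{I}\cap R=I^{(\ell)}=I^{\ell}$. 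This is the step that forces the exponent $\fl{\la}-\fpt{I}+1$, and it works precisely because $\operatorname{ht} I=\fpt{I}$ for maximal minors.

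For the reverse inclusion $I^{\ell}\subseteq\test{\la}{I}$ I would run a Frobenius manipulation. Fix $q=p^{e}$ large, so that $\test{\la}{I}=\repe{I}$ and, in addition, $I^{k}\nsubseteq\mathfrak m^{[q]}$ where $k=\lceil\la q\rceil-\ell q$; the second condition holds for $e\gg0$ because $\nu^{\mathfrak m}_{I}(q)/q\to c^{\mathfrak m}(I)$ (Definition~\ref{fpt}) with $c^{\mathfrak m}(I)\ge\fpt{I}$ — it is an $\F$-threshold, hence an $\F$-jumping number by Theorem~\ref{threshjumps} — while $k/q\to\la-\ell<\fpt{I}$. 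Because $I^{k}$ is homogeneous its $e^{\text{th}}$ root is homogeneous, so (using Proposition~\ref{propertieserooth}(1)) the condition $I^{k}\nsubseteq\mathfrak m^{[q]}$ is equivalent to $I^{k}\nsubseteq\mathfrak b^{[q]}$ for every proper ideal $\mathfrak b$. Now by Definition~\ref{ethroot} it suffices to prove $I^{\ell}\subseteq\mathfrak b$ whenever $I^{\lceil\la q\rceil}\subseteq\mathfrak b^{[q]}$: if this failed, $\mathfrak c=(\mathfrak b:I^{\ell})$ would be proper, and from $\left(I^{\ell}\right)^{[q]}I^{k}\subseteq I^{\ell q+k}=I^{\lceil\la q\rceil}\subseteq\mathfrak b^{[q]}$ together with $\mathfrak b^{[q]}:\left(I^{\ell}\right)^{[q]}=(\mathfrak b:I^{\ell})^{[q]}$ (colon commutes with the flat Frobenius on the regular ring $R$) one would get $I^{k}\subseteq\mathfrak c^{[q]}$ with $\mathfrak c$ proper, contradicting the previous sentence. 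This yields $I^{\ell}\subseteq\repe{I}=\test{\la}{I}$.

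Granting $\test{\la}{I}=I^{\fl{\la}-\fpt{I}+1}$ for all $\la\ge\fpt{I}-1$, the $\F$-thresholds follow: $I$ being nonzero, proper and homogeneous gives $I^{j}\supsetneq I^{j+1}$ for every $j$, so $\la\mapsto\test{\la}{I}$ is constant on $[\fpt{I}-1,\fpt{I})$ and on each $[j,j+1)$ with $j\ge\fpt{I}$ an integer, and drops strictly at each such integer; by Theorem~\ref{threshjumps} the $\F$-thresholds coincide with these jumping numbers, i.e. with the set of all integers $\ge\fpt{I}$. The main obstacle is the second inclusion: one has to squeeze the root ideal $\repe{I}$ by playing the $\F$-threshold $c^{\mathfrak m}(I)$ off against $\fpt{I}$ through the Frobenius base-change identity for colon ideals, whereas the first inclusion is just Skoda applied at the generic point of the determinantal variety.
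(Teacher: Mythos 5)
Your proof is correct, and it reaches the formula by a genuinely different route on both halves. For the containment $\test{\la}{I}\subseteq I^{\fl{\la}-\fpt{I}+1}$ the paper (Proposition \ref{row}) inducts on $m$: it inverts $x_{11}$, transports the test ideal through the row-operation isomorphism $\psi$ using Proposition \ref{propertieserooth}(4)--(6), and contracts via the Bruns--Conca fact that $x_{11}$ is a nonzerodivisor modulo $I^{\fl{\la}-n+m}$; you instead localize at the prime $I$, use that $R_I$ is regular local of dimension $\operatorname{ht}I=n-m+1=\fpt{I}$, so that $IR_I$ is generated by $\fpt{I}$ elements, apply Skoda (Theorem \ref{Skoda}) there, and contract via $I^{\ell}R_I\cap R=I^{(\ell)}=I^{\ell}$ --- the same symbolic-power fact the paper records in \S\ref{BV}. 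This is shorter and induction-free, and it makes transparent that the exponent is forced by $\operatorname{ht}I=\fpt{I}$ at the generic point. For the reverse inclusion the paper exhibits the explicit product of minors $\Delta=\delta_0^{q-1}\cdots\delta_{n-m}^{q-1}$, whose lex-initial monomial $\eta$ lies in $\mathcal{B}_e$, and reads membership in the $e$-th root off the basis expansion of $g^{q}\Delta$; you instead use $c^{\mathfrak m}(I)\ge\fpt{I}$ (via Theorem \ref{threshjumps}) and homogeneity to get $I^{k}\nsubseteq\mathfrak b^{[q]}$ for every proper $\mathfrak b$, where $k=\lceil\la q\rceil-\ell q$, and then flatness of Frobenius on the regular ring $R$ (so $\mathfrak b^{[q]}:(I^{\ell})^{[q]}=(\mathfrak b:I^{\ell})^{[q]}$) to force $I^{\ell}\subseteq\mathfrak b$ whenever $I^{\lceil\la q\rceil}\subseteq\mathfrak b^{[q]}$, hence $I^{\ell}\subseteq\req{I^{\lceil\la q\rceil}}\subseteq\test{\la}{I}$. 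That argument is sound; note it actually proves the completely general fact that $\mathfrak a^{\ell}\subseteq\test{\la}{\mathfrak a}$ whenever $\la-\ell<\fpt{\mathfrak a}$, which also follows in one line from the standard containment $\mathfrak a\cdot\test{t}{\mathfrak a}\subseteq\test{(t+1)}{\mathfrak a}$ (so homogeneity is dispensable), whereas the paper's $\Delta$-computation is a hands-on certificate adapted to the determinantal structure. If you write your version up, add a line that the $e$-th root of a homogeneous ideal is homogeneous (immediate from the monomial basis used to compute $e$-th roots) and a word that Skoda and Proposition \ref{propertieserooth}(4) legitimately apply in the localization $R_I$. The passage from the formula to the set of $\F$-thresholds (strict drops exactly at integers $\ge\fpt{I}$ because $I^{j}\supsetneq I^{j+1}$, plus Theorem \ref{threshjumps}) is the same as the paper's, and, like the paper, you take $\fpt{I}=n-m+1$ from Theorem \ref{MSV} as input (using in addition its integrality to make $\ell$ an integer).
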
 The proof of Theorem \ref{Main} will use and follow that of Proposition \ref{row}.
\begin{proposition}\label{row}
Let $R=k[x_{11},\dots,x_{mn}]$ be a polynomial ring over a field of positive characteristic 
and let $I=I_{_{m}}(\!X_{_{m\times n}}\!)$ be the ideal generated by the maximal minors of a matrix of indeterminates ${X\!}_{_{\,m\times n}}$\!. \ For all \ $\la\geq n-m\,, \ {\test{\la}{I}\subseteq I^{\fl{\la}-n+m}}\,,$ where ${n-m+1=\fpt{I}}\,.$
\end{proposition}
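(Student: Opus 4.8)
The plan is to induct on the number of rows $m$, using the generic elementary row and column operations recalled in \S\ref{BV} to replace the ideal of maximal minors of an $m\times n$ matrix by the ideal of maximal minors of an $(m-1)\times(n-1)$ matrix. First I would record, from Theorem \ref{MSV} with $t=m$, that $\fpt{I}=\min\{\,n-k\mid 0\le k\le m-1\,\}=n-m+1$, so that the assertion to be proved is $\test{\la}{I}\subseteq I^{\fl{\la}-\fpt{I}+1}$ for $\la\ge\fpt{I}-1$. Since for $\fpt{I}-1\le\la<\fpt{I}$ the exponent $\fl{\la}-n+m$ vanishes and the right-hand side is $I^{0}=R$, there is nothing to prove there; so one may as well take $\la\ge\fpt{I}=n-m+1$ and set $\ell:=\fl{\la}-n+m\ge 1$ (and, as usual, assume $m\le n$, the case $m>n$ being vacuous).

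The base case $m=1$ is immediate: $I=(x_{11},\dots,x_{1n})$ is generated by the $n$ variables of $R$, and Lemma \ref{all1bys} gives $\test{\la}{I}=I^{\fl{\la}-n+1}=I^{\fl{\la}-n+m}$ for all $\la\ge n-1=n-m$. For the inductive step, assume $m\ge 2$ and the proposition for maximal minors of an $(m-1)\times(n-1)$ matrix. Keep the notation of \S\ref{BV}: $S=R[x_{11}^{-1}]$, $\psi\colon T\to S$ the isomorphism there, $T_{0}=k[\,\underline{Y}\,]$ the polynomial ring in the entries of the generic $(m-1)\times(n-1)$ matrix $\underline{Y}$, $I'=I_{m-1}(\underline{Y})\subseteq T_{0}$ its ideal of maximal minors, and $T=\widetilde T[x_{11}^{-1}]$ where $\widetilde T$ is the polynomial ring obtained from $T_{0}$ by adjoining the first row and first column variables $x_{1j},x_{i1}$; recall from \S\ref{BV} that $I^{\ell}S=\psi({I'}^{\,\ell}T)$ and $I^{\ell}=I^{\ell}S\cap R$ for every $\ell\in\mathbb{N}$. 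Since $I'$ is an ideal of maximal minors with $(n-1)-(m-1)=n-m$, the induction hypothesis applies (the relevant range $\la\ge n-m$ certainly holds here) and gives $\test{\la}{I'}\subseteq{I'}^{\,\fl{\la}-(n-1)+(m-1)}={I'}^{\,\ell}$. I would then push this inclusion along the chain $T_{0}\hookrightarrow\widetilde T$, $\widetilde T\rightsquigarrow T=\widetilde T[x_{11}^{-1}]$, $\psi\colon T\xrightarrow{\ \sim\ }S$, $R\hookrightarrow S=R[x_{11}^{-1}]$: polynomial extension (Proposition \ref{propertieserooth}(6)) yields $\test{\la}{I'\widetilde T}=\test{\la}{I'}\,\widetilde T\subseteq(I'\widetilde T)^{\ell}$; localizing at $x_{11}$ (Proposition \ref{propertieserooth}(4)) yields $\test{\la}{I'T}\subseteq(I'T)^{\ell}$; the isomorphism $\psi$ (Proposition \ref{propertieserooth}(5)) turns this into $\test{\la}{IS}=\psi(\test{\la}{I'T})\subseteq\psi((I'T)^{\ell})=I^{\ell}S$; and localizing $R\subseteq S$ once more gives $\test{\la}{I}\,S=\test{\la}{IS}\subseteq I^{\ell}S$. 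Contracting to $R$ then finishes: $\test{\la}{I}\subseteq\test{\la}{I}\,S\cap R\subseteq I^{\ell}S\cap R=I^{\ell}=I^{\fl{\la}-n+m}$.

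Two points will need attention rather than real effort. Proposition \ref{propertieserooth} is phrased for the operation $\ree{(\,\cdot\,)}$, so I would first observe that parts (4), (5), (6) pass to generalized test ideals, using $\test{\la}{\mathfrak a}=\repe{\mathfrak a}$ for $e\gg0$ together with the fact that $\mathfrak a\mapsto\mathfrak a^{\lceil\la q\rceil}$ commutes with localization, polynomial extension and ring isomorphisms and that these operations preserve the eventual stabilization of the root ideals --- exactly the kind of passage already used in the proof of Theorem \ref{Skoda} and in the derivation of Lemma \ref{all1bys}. The real crux, and the reason the argument is confined to \emph{maximal} minors, is the final contraction step: it relies on $I^{\ell}S\cap R=I^{\ell}$, equivalently on $x_{11}$ being a nonzerodivisor on $R/I^{\ell}$, which holds because the ordinary powers of an ideal of maximal minors coincide with its symbolic powers and are therefore $I$-primary. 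This is precisely the input recorded in \S\ref{BV}; for arbitrary-size minors it fails, and that is where the present method stops.
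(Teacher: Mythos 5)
Your proposal is correct and follows essentially the same route as the paper: induction on $m$ with base case Lemma \ref{all1bys}, transfer of the test ideal along the row-operation isomorphism of \S\ref{BV} via Proposition \ref{propertieserooth}(4)--(6), and contraction back to $R$ using that $x_{11}$ is a nonzerodivisor on $R/I^{\ell}$ (symbolic $=$ ordinary powers of maximal minors). The only cosmetic differences are your explicit dismissal of the range $n-m\le\la<n-m+1$ and your spelled-out remark that the root-ideal properties pass to test ideals for $e\gg 0$, which the paper performs in a single displayed chain of equalities.
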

\begin{proof} Let $\la\geq n-m$ be given.  As remarked in
Proposition \ref{propertieserooth}, the $e^{\text{th}}$ root construction behaves well with respect to the isomorphisms, localization and extension of variables.  Using notation from \ref{row}, for $e\gg 0\,,$ and $q=p^e$, one has
 \begin{align*}
\test{\la}{I}S\, & = \,\repeq{I}S
\,\stackrel{\ref{propertieserooth}(4)}{=}\,\req {I^{\lceil{\la\cdot p^e}\rceil}S}
\,\stackrel{\ref{BV}}{=}\,\req{\psi\left({{I_{m-1}(\underline{Y\!\!}\,)}}^{\lceil{\la\cdot q}\rceil}\, T\right)}
\\
&\,\stackrel{\ref{propertieserooth}(5)}{=}\,\psi\left(\req{{I^{\prime}}^{\lceil{\la\cdot q}\rceil} \, T}\right)
\,\stackrel{\ref{propertieserooth}(6)}{=}\,\psi\left(\req{{I^{\prime}}^{\lceil{\la\cdot  q}\rceil}}\, T\right)
\,\stackrel{\ref{BV}}{=}\,\psi\left(\test{\la}{I^{\prime}}\, T\right)
\,.
\end{align*}
We argue by induction, on the size $m$ of the generating minors of the ideal.  If $m=1$, Lemma \ref{all1bys} establishes that \ $\test{\la}{I}={(\underline{X\!})}^{\fl{\la}-n+1}\ \ \text{for all } \ {\la\,\geq\, n-1}.$\  If $m>1$, the induction hypothesis 
  yields \ $\test{\la}{I_{m-1}(\underline{Y\!})}
\subseteq {I_{m-1}(\underline{Y\!})}^{{\fl{\la}}-(n-1)+(m-1)}.$ \  
Thus
\begin{align*}
\test{\la}{I}S&
= \psi\left( \test{\la}{I_{m-1}(\underline{Y\!})}\, T\right)\subseteq \psi\left( {I_{m-1}(\underline{Y\!})}^{{\fl{\la}}-(n-1)+(m-1)} \, T\right)
\\
& =\psi\left( {I^{\prime}}^{{\fl{\la}}-(n-1)+(m-1)}\, T\right)=\psi\left( {I^{\prime}}^{{\fl{\la}}-n+m}\, T\right)\ {=} \  I^{{\fl{\la}-\fpt{I_{m}}+1}}\, S.
\end{align*} 
where the last equality follows from \S\ref{BV} and, the equality $\fpt{I_{m}}=n-m+1$,\  from Theorem \ref{MSV}.
As proved in \cite[2.2, 2.3]{BC}, $x_{11}$ is not a zero divisor in \ $R/ I^{{\fl{\la}-n+m}} $, thus \ $
\test{\la}{I}\ \subseteq\ ( I^{{\fl{\la}-n+m}} \,:_{R}\, x_{11})
\ =\  I^{{\fl{\la}-n+m}}$ holds, as claimed.
\end{proof}
\noindent We proceed to proof our main result:
\begin{proof}[Proof of Theorem \ref{Main}]
First note that the inclusion  $\test{\lambda}{I}\subseteq I^{\lfloor{\la}\rfloor  - \fpt{I}+1}$  follows from Proposition \ref{row} and Theorem \ref{MSV}. To establish the reverse inclusion, let $\la>n-m$ be given, fix  $e\gg0\text{ and }\, q=p^e$. \  Setting $\varepsilon=\frac{\fpt I}{q}$, one has
$ \la\,<\,\fl{\la}+1- {\varepsilon} 
\,\in\, \frac{1}{q}\mathbb{N},$ \ 
{ and  } ${\tau \left(\,  (\fl{\la}+1-{\varepsilon} )\,\bullet I\,\right)=\left(I^{\, (\fl{\la}+1-{\varepsilon} ) \,\cdot\, q} \right)^{[1/{q}]}\!\!.}$  \  As   $\tau \left(\, (\fl{\la}+1-{\varepsilon} )\,\bullet I\,\right)
\ \subseteq\
\tau \left(\, (\fl{\la}+1-({\fl{\la}+1-\la}) )\,\bullet I\,\right)
\ = \ \tau (\la \bullet I)\,,$ it is enough to show that
 \ $
I^{\lfloor{\la}\rfloor  - \fpt{I}+1}
\ \subseteq\ \left(I^{\, (\fl{\la}+1-{\varepsilon} ) \,\cdot\, q} \right)^{[1/{q}]}.$

 For each\ \ $0\leq  i \leq n-m\, ,$ \ \ set 
\[\left\{
\begin{array}{l}
\delta_{i}:=\,[1\,2\,\dots\,m\,|\, (1+i)\,(2+i)\,\dots\,(m+i)]\in  I\\
\mu_{i}:=\,x_{1(1+i)}\, x_{2(2+i)}\,\dots\,x_{m(m+i)}\\
\Delta\,:=\,{\delta_0}^{{q-1}}\,{\delta_1}^{{q-1}}\,\cdots \,{\delta_{n-m}}^{{q-1}}\ \in  I^{(n-m+1)\, ({q-1})}=I^{\fpt {I}\, ({q-1}) } \\
\text{and}\\
\eta\, :=\, {\mu_0}^{ {q-1}}\,{\mu_1}^{ {q-1}}\,\cdots \,{\mu_{n-m}}^{ {q-1}}
\end{array}
\right.
\]
Consider the lexicographical term order $\<$  induced by the variable order:
\[x_{11}\,{\suc}\,x_{12}\,{\suc}\,\dots\, {\suc}\,x_{1n}\,{\suc}\,x_{21}{\suc}\,x_{22}\,{\suc}\,\dots \,{\suc}\,x_{2n}\,{\suc}\,\dots \,{\suc}\,x_{m1}\,{\suc}\,x_{m2}\,{\suc}\,\dots \,{\suc}\,x_{mn}\, ,\]
for which one has
$\quad
\mu_{0}\,=\,\ini{\,\delta_{0}}\ ,\ \mu_{1}\,=\,\ini{\,\delta_{1}}\,,\ \dots\,,\ \mu_{n-m}\,=\,\ini{\,\delta_{n-m} }\, 
\quad$\\
(here $\,\ini {\,g}$ denotes the \textit{inicial form with respect to  the term order  $\<$\,} of a polynomial $g$).
Thus
$\ \eta= {(\ini{\delta_0})}^{ {q-1}} {(\ini{\delta_1})}^{ {q-1}} \cdots {(\ini{\delta_{n-m}})}^{ {q-1}}= \ini \Delta\ \in \textsf{supp} \Delta,$  and 
${\eta= ({x_{11}\,}\, {x_{22}\,}\dots{x_{mm}\,}\, {x_{12}\,}\, {x_{23}\,}\dots{x_{m(m+1)}\,}\  \dots\  {x_{1(n-m+1)}\,}\, {x_{2(n-m+2)}\,}\dots\,{x_{mn}\,} )^{ {q-1}}
\in\mathcal{B}_{e}\,.}$ \ \ 
One can then write 
 \begin{align*}
\Delta
&=\quad {g_{\eta}}^{q} \, \cdot\,\ini{\Delta} \ \ +\sum_{\nu\in\mathcal{B}_{e}, \,\nu\<\ini{\Delta} }\, {g_{\nu}}^{q} \cdot\,\nu\quad=\quad {g_{\eta}}^{q} \, \cdot\, \eta \ \ +\sum_{\nu\in\mathcal{B}_{e}, \,\nu\neq\eta}\, {g_{\nu}}^{q} \cdot\,\nu
\end{align*}
where, by degree considerations, $\ {g_{\eta}}\ \in k. $\ \ For all \ $g$ \ in \ $I^{\lfloor{\la}\rfloor  - \fpt{I}+1}$, \ one has
 \begin{align*}
 {g}^{q}\,{\Delta}\ 
&\in\  
I^{\fpt{I}\,\cdot\,\,( {q-1})+{(\lfloor{\la}\rfloor  - \fpt{I}+1)}\,\cdot\,q }
\,=\, I^{ ({\lfloor{\la}\rfloor +1-\frac{\fpt{I}}{q}})\,\cdot\,q}
\,=\, I^{ ({\lfloor{\la}\rfloor +1-\varepsilon})\,\cdot\,q}\\
\text{ and }
\qquad{g}^{q}\,{\Delta}
&={(g\, g_{\eta})}^{q}\cdot\ \eta \ +\sum_{\nu\in\mathcal{B}, \,\nu\neq\eta}\, {(g\, g_{\nu})}^{q} \, \nu\,.
\end{align*}  
Note  \ref{Prop. 2.5 BMS (2008)},  yields 
 \ $g \in {\left( I^{ ({\lfloor{\la}\rfloor +1-\varepsilon})\,\cdot\,q} \right)}^{[1/{q}]}$ \text{ hence  }
$I^{\lfloor{\la}\rfloor  - \fpt{I}+1}\ \subseteq \ {\left( I^{ ({\lfloor{\la}\rfloor +1-\varepsilon})\,\cdot\,q} \right)}_{.}^{[1/{q}]}$
Therefore
$\ \ \test{\la }{I}=I^{\lfloor{\la}\rfloor  - \fpt{I}+1} \ \ \text{ for all }\ \ \la\geq {\fpt{I}-1}.$
\end{proof}

\subsection{Test ideals versus Multiplier ideals.}\text{}\\
This family of ideals was shown to describe the multiplier ideals of determinantal varieties over fields of characteristic zero, in Amanda Johnson's thesis  (cf. \cite{John}). 
Letting $R$ and  $I=I_{_{t}}(\!X_{_{m\times n}}\!)$ we let $\mathcal{J}({\la}\,\bullet\,{I})$ denote the multiplier ideal of $I$ at $\la$ (cf.\cite{BL}). The ideal $I$ can be viewed as ideal of $\mathbb{Z}[x_{11},\dots,x_{mn}]$, or of ${\mathbb{Z}_{p}}[x_{11},\dots,x_{mn}]$, and the same holds for $\test{\la}{I}$ and $\mathcal{J}({\la}\,\bullet\,{I})$, from the description given in \ref{Main} and \cite{John}. Indeed, their reduction modulo $p$, as described in \cite[3.3]{HY}, \cite[3.4,4.1]{BFS} or\cite[\S 4.2]{ST}, coincides with the ideals themselves, viewed as ideals in ${\mathbb{Z}_{p}}[x_{11},\dots,x_{mn}]$.
\begin{corollary}
Let $R=k[x_{11},\dots,x_{mn}]$ be a polynomial ring over a field $k$ of positive characteristic $p$,
and let $I=I_{_{t}}(\!X_{_{m\times n}}\!)$ be the ideal generated by the maximal minors of ${X\!}{_{\,m\times n}}$. \ 
For all  \ $\la\geq0\,,$ \ \ \[{\test{\la}{I}=\mathcal{J}_{p}({\la}\,\bullet\,{I})}.\]  
\end{corollary}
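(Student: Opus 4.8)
The plan is to deduce the corollary directly from Theorem \ref{Main} together with Johnson's computation of the multiplier ideals in \cite{John}, without redoing any resolution-of-singularities argument. First I would recall, from \cite{John}, the explicit formula for the multiplier ideal of a determinantal ideal of maximal minors: for a matrix of maximal minors one has $\mathcal{J}(\la\bullet I)=I^{\lfloor\la\rfloor-\fpt{I}+1}$ for $\la\geq\fpt{I}-1$ (with the convention that a non-positive exponent gives the unit ideal), and $\mathcal{J}(\la\bullet I)=R$ for $\la<\fpt{I}$; crucially, the value $\fpt{I}=n-m+1$ appearing there is exactly the log canonical threshold, which by Theorem \ref{MSV} agrees with the $\F$-pure threshold in every characteristic. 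Matching this term-by-term against the formula $\test{\la}{I}=I^{\lfloor\la\rfloor-\fpt{I}+1}$ from Theorem \ref{Main} (and $\test{\la}{I}=R$ for $\la<\fpt{I}$, which follows since $\test{\la}{I}\supseteq\test{\fpt{I}-1}{I}=I^0=R$ for such $\la$) gives a literal equality of the two families of ideals over $\mathbb{Z}$, hence over $\mathbb{Z}_p$.

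Second, I would address the reduction-mod-$p$ bookkeeping that makes the statement meaningful: one must check that both $\test{\la}{I}$ and $\mathcal{J}(\la\bullet I)$ are each generated by the "same" monomial-in-minors expression independent of the ground field, so that reducing the characteristic-zero multiplier ideal modulo $p$, in the sense of \cite[3.3]{HY}, \cite[3.4,4.1]{BFS}, genuinely returns $\mathcal{J}_p(\la\bullet I)$. This is where the remark immediately preceding the corollary does the work: the ideal $I$, its powers $I^\ell$, and hence both invariant families, are defined over $\mathbb{Z}$ by the same generators, and the operation of specializing coefficients to $\mathbb{Z}_p$ commutes with forming powers of $I$ (powers of a determinantal ideal of maximal minors have a ground-field-independent generating set — the standard monomials — by the Bruns–Conca / Bruns–Vetter theory already invoked in \S\ref{BV} and in the proof of Proposition \ref{row}). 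So the reduction of $\mathcal{J}(\la\bullet I)=I^{\lfloor\la\rfloor-\fpt{I}+1}$ modulo $p$ is $I_p^{\lfloor\la\rfloor-\fpt{I}+1}=\mathcal{J}_p(\la\bullet I)$, and this equals $\test{\la}{I}$ by Theorem \ref{Main}.

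Finally I would assemble these into a short formal proof: fix $\la\geq 0$; if $\la<\fpt{I}$ then both sides equal $R$ (left side as noted above, right side since the log canonical threshold is $\fpt{I}$ by Theorem \ref{MSV} and Johnson's formula); if $\la\geq\fpt{I}-1$, then Theorem \ref{Main} gives $\test{\la}{I}=I^{\lfloor\la\rfloor-\fpt{I}+1}$, while \cite{John} together with the reduction-mod-$p$ discussion gives $\mathcal{J}_p(\la\bullet I)=I^{\lfloor\la\rfloor-\fpt{I}+1}$, and the two agree; the interval $\fpt{I}-1\le\la<\fpt{I}$ is covered by either description (the exponent is $0$ there). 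The main obstacle, such as it is, is not an analytic or algebraic difficulty but a bookkeeping one: one must state precisely what "reduction modulo $p$ of the multiplier ideal" means and verify that, because everything in sight is cut out by the same standard-monomial generators over $\mathbb{Z}$, this reduction is insensitive to $p$ — in other words, the content of the corollary is that Johnson's characteristic-zero formula and the characteristic-$p$ formula of Theorem \ref{Main} are \emph{syntactically} the same, and the only thing to prove is that the two appearances of $\fpt{I}$ (as lct and as $\F$-pure threshold) coincide, which is Theorem \ref{MSV}.
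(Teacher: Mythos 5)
Your proposal is correct and follows essentially the same route as the paper: the corollary is obtained by matching the formula of Theorem \ref{Main} against Johnson's characteristic-zero description of the multiplier ideals, using Theorem \ref{MSV} to identify the two thresholds and the remark preceding the corollary (generators defined over $\mathbb{Z}$, so reduction modulo $p$ returns the same ideals) to handle the reduction bookkeeping. Your write-up merely makes explicit the standard-monomial justification that the paper leaves implicit.
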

 Motivated by this, and other supporting results from \cite{HY} and \cite{ST13}, we make the following conjecture:
\begin{conjecture}\label{coMain}
Let $R=k[x_{11},\dots,x_{mn}]$ be a polynomial ring over a field $k$ of positive characteristic $p$,
and let $I=I_{_{t}}(\!X_{_{m\times n}}\!)$ be the ideal generated by $t$-minors of ${X\!}{_{\,m\times n}}$  for  \ ${1\leq t\leq m\leq n}$. \  For all  \ $\la\geq0\,,$ \ \ ${\test{\la}{I}=\mathcal{J}_{p}({\la}\,\bullet\,{I})}.$  
\end{conjecture}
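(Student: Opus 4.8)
The statement is the content of the Corollary above when $t=m$, so we may assume $t<m$. The plan is to induct on $t$ by means of the change of coordinates of~\S\ref{BV}, which generalises from maximal minors to arbitrary $t$‑minors (\cite[\S\S 5, 10]{BV}): after inverting $x_{11}$ the isomorphism $\psi$ carries $I_{t-1}(\underline Y)$, for $\underline Y$ a generic $(m-1)\times(n-1)$ matrix, onto $I_t(\underline X)$ extended to the extra indeterminates $x_{11}^{\pm1}, x_{1j}, x_{i1}$; more generally $\psi(I_k(\underline Y)\,T)=I_{k+1}(\underline X)\,S$ for every $k$, so that $I_{k+1}(\underline X)^{\ell}S=\psi(I_k(\underline Y)^{\ell}T)$ and $I_{k+1}(\underline X)^{(\ell)}S=\psi(I_k(\underline Y)^{(\ell)}T)$, in the notation of~\S\ref{BV}. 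Since both $\test{\lambda}{I}$ and the multiplier ideal are generated by polynomials over the prime field and are unchanged by extension of scalars, we may also assume $k=\overline k$. There are then three ingredients: (i)~an explicit description of $\mathcal J_p(\lambda\bullet I_t)$ and of its behaviour under the localisation of~\S\ref{BV}; (ii)~the inductive computation of $\test{\lambda}{I_t}$ after inverting $x_{11}$, copying the proof of Proposition~\ref{row}; (iii)~a descent from $R[x_{11}^{-1}]$ to $R$.

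For~(i): Johnson's theorem (\cite{John}) describes $\mathcal J(\lambda\bullet I_t(\underline X))$ as an explicit intersection $\bigcap_{k=0}^{t-1} I_{k+1}(\underline X)^{(a_k(\lambda))}$ of symbolic powers, with first jumping number $\lct{I_t}=\fpt{I_t}$ (cf.\ Theorem~\ref{MSV}). Symbolic powers of generic determinantal ideals are straightening‑closed, with a standard‑monomial basis independent of the base ring (\cite{BV}); hence each $I_{k+1}(\underline X)^{(a)}$ is already defined over $\mathbb Z_p$, and $\mathcal J_p(\lambda\bullet I_t)$ is obtained by reading Johnson's formula there. One then checks its compatibility with~\S\ref{BV}: the $k=0$ term $I_1(\underline X)^{(a)}=\mathfrak m^{a}$ becomes the unit ideal once $x_{11}$ is inverted, while for $k\ge1$ the ideal $I_{k+1}(\underline X)$ avoids $x_{11}$ and $I_{k+1}(\underline X)^{(a)}S=\psi(I_k(\underline Y)^{(a)}T)$; since intersection commutes with localisation and with $\psi$, this gives $\mathcal J_p(\lambda\bullet I_t)\,S=\psi\!\left(\bigcap_{k=1}^{t-1} I_k(\underline Y)^{(a_k(\lambda))}T\right)$, which agrees with $\psi(\mathcal J_p(\lambda\bullet I_{t-1}(\underline Y))\,T)$ once $a_k^{(m,n,t)}(\lambda)=a_{k-1}^{(m-1,n-1,t-1)}(\lambda)$ for $1\le k\le t-1$. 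This last identity should be automatic, since Johnson's exponents are governed by the quantities $m-k,\,n-k,\,t-k$---already visible in $\lct{I_t}=\min_k (m-k)(n-k)/(t-k)$---which the shift $(m,n,t,k)\mapsto(m-1,n-1,t-1,k-1)$ leaves fixed. (Geometrically this is the compatibility of the log resolution of $(\mathbb A^{mn}, I_t)$, built by blowing up the smaller determinantal loci, with its restriction to $\{x_{11}\neq0\}$, where the pair becomes the pullback of $(\mathbb A^{(m-1)(n-1)}, I_{t-1})$ along a trivial projection.)

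For~(ii): repeating verbatim the first display in the proof of Proposition~\ref{row}, using Proposition~\ref{propertieserooth}(4),(5),(6) and~\S\ref{BV} with $I=I_t(\underline X)$ and $I'=I_{t-1}(\underline Y)$, yields $\test{\lambda}{I_t}\,S=\psi(\test{\lambda}{I_{t-1}(\underline Y)}\,T)$ for $e\gg0$. We induct on $t$. The base case $t=1$ is the maximal ideal $I_1=(x_{ij})$, for which Lemma~\ref{all1bys} with $r=mn$ gives $\test{\lambda}{I_1}=I_1^{\fl{\lambda}-mn+1}$ for $\lambda\ge mn-1$; this coincides with the reduction mod $p$ of the classical multiplier ideal $\mathcal J(\lambda\bullet\mathfrak m)=\mathfrak m^{\fl{\lambda}-mn+1}$, that is, with Johnson's formula for $t=1$. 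Granting the conjecture for $(m-1)\times(n-1)$ matrices and $(t-1)$‑minors, transporting through $\psi$ and combining with~(i) gives $\test{\lambda}{I_t}\,S=\mathcal J_p(\lambda\bullet I_t)\,S$.

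Finally, for~(iii): both $\test{\lambda}{I_t}$ and $\mathcal J_p(\lambda\bullet I_t)$ are stable under the $\mathrm{GL}_m\times\mathrm{GL}_n$‑action $\underline X\mapsto A\,\underline X\,B$ fixing $I_t$---for the test ideal by Proposition~\ref{propertieserooth}(5), and for $\mathcal J_p$ because its defining formula is manifestly invariant. The finite set of associated primes of the corresponding quotient is therefore permuted by a connected group, hence pointwise fixed, hence contained in the chain $\{I_1(\underline X)\supsetneq\cdots\supsetneq I_t(\underline X)\}$ of $\mathrm{GL}$‑invariant primes, none of which contains the variable $x_{11}$. Consequently $x_{11}$ is a non‑zerodivisor modulo each of these ideals, each equals its own saturation with respect to $x_{11}$, and
\[\test{\lambda}{I_t}=\test{\lambda}{I_t}\,S\cap R=\mathcal J_p(\lambda\bullet I_t)\,S\cap R=\mathcal J_p(\lambda\bullet I_t);\]
the range $0\le\lambda<\fpt{I_t}$ is trivial, both ideals being $R$ there and $\fpt{I_t}=\lct{I_t}$ by Theorem~\ref{MSV} and~\cite{John}. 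The real obstacle is~(i): for $t<m$ the symbolic powers of $I_t$ are not ordinary powers, so neither the non‑zerodivisor statement of~\cite{BC} nor the leading‑term computation in the proof of Theorem~\ref{Main} applies, and one must instead control symbolic powers of determinantal ideals under this localisation and reconcile two a priori distinct combinatorial descriptions of the jumping exponents.
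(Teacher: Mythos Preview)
The paper does not prove this statement: it is stated as an open conjecture, motivated by the maximal--minor case established in Theorem~\ref{Main}. There is therefore no proof in the paper to compare your proposal against. Your write--up is itself a strategy rather than a proof, and you rightly flag step~(i) as incomplete; however, step~(iii) contains a concrete error that points to a deeper obstruction you have not identified.

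You assert that the $\mathrm{GL}_m\times\mathrm{GL}_n$--invariant primes $I_1(\underline X)\supsetneq\cdots\supsetneq I_t(\underline X)$ ``none of which contains the variable $x_{11}$''. This is false for $I_1(\underline X)=\mathfrak m=(x_{11},\dots,x_{mn})$. The slip is not cosmetic: for $t<m$ and $\lambda$ sufficiently large, Johnson's formula for $\mathcal J(\lambda\bullet I_t)$ carries a nontrivial $\mathfrak m^{a_0(\lambda)}$ component, so $\mathfrak m$ \emph{is} an associated prime of $R/\mathcal J_p(\lambda\bullet I_t)$, and the saturation $\mathcal J_p(\lambda\bullet I_t)\,S\cap R$ genuinely loses that component. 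The same problem afflicts the test--ideal side, where nothing in your induction controls the $\mathfrak m$--primary part of $\tau(\lambda\bullet I_t)$. Using the full $\mathrm{GL}$--symmetry to localise at every $x_{ij}$ shows at best that the two ideals agree on the punctured spectrum; it cannot detect what happens at the origin.

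This is precisely why the paper's argument for $t=m$ has two independent halves: Proposition~\ref{row} obtains one containment by localisation and induction (where the descent works because for maximal minors $I^\ell=I^{(\ell)}$ is $I_m$--primary, so $x_{11}$ is a non--zerodivisor on the \emph{target}), while the proof of Theorem~\ref{Main} produces the reverse containment \emph{directly}, via the explicit element $\Delta$ and the leading monomial $\eta\in\mathcal B_e$. Your strategy has an analogue of the first half but none of the second. A proof of the conjecture along these lines would still need a direct construction, adapted to symbolic rather than ordinary powers, witnessing membership of the relevant symbolic--power intersections in $(I_t^{\lceil\lambda q\rceil})^{[1/q]}$; that is the substantive missing ingredient.
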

\section*{Acknowledgment}
The author would like to thank Matteo Varbaro for the stimulating discussions, as well as the Engineering and Physical Sciences Research Council and M. Katzman for the grant support (EP/J005436/1).

\end{document}